\numberwithin{equation}{section}
\definecolor{purple}{rgb}{0.9,0,0.8}
\definecolor{gray}{rgb}{0.7,0.7,0.7}
\newcommand{\abbr}[1]{{\sc\lowercase{#1}}}
\newtheorem{thm}{Theorem}[section]
\newtheorem{cor}[thm]{Corollary}
\newtheorem{lem}[thm]{Lemma}
\newtheorem{ppn}[thm]{Proposition}
\newtheorem{cnj}[thm]{Conjecture}
\theoremstyle{definition}
\newtheorem{defn}[thm]{Definition}
\newtheorem{remark}[thm]{Remark}
\newtheorem{assumption}[thm]{Assumption}
\newtheorem{remark*}{Remark}[]
\newcommand{\beq}{\begin{equation}}
\newcommand{\eeq}{\end{equation}}
\newcommand{\ep}{\epsilon}
\newcommand{\bA}{\mathbb{A}}
\newcommand{\B}{\mathbb{B}}
\newcommand{\bB}{\mathbb{B}}
\newcommand{\bD}{\mathbb{D}}
\newcommand{\E}{\mathbb{E}}
\newcommand{\bG}{\mathbb{G}}
\newcommand{\bI}{\mathbb{I}}
\newcommand{\bK}{\mathbb{K}}
\newcommand{\N}{\mathbb{N}}
	\renewcommand{\P}{\mathbb{P}}
\newcommand{\R}{\mathbb{R}}
\newcommand{\Z}{\mathbb{Z}}
\newcommand{\cD}{\mathcal{D}}
\newcommand{\cF}{\mathcal{F}}
\newcommand{\cG}{\mathcal{G}}
\newcommand{\cK}{\mathcal{K}}
\newcommand{\wh}{\widehat}
\renewcommand{\emptyset}{\varnothing}
\renewcommand{\setminus}{\backslash}
\newcommand{\rw}{{\rm \abbr{rw}}}
\newcommand{\bm}{{\rm \abbr{bm}}}
\begin{document}

\title[Walking within  Growing Domains: Recurrence versus Transience ]
{Walking within  Growing Domains:  \\Recurrence versus Transience}
\date{\today}
\author[A.\ Dembo]{Amir Dembo$^*$}
\author[R.\ Huang]{Ruojun Huang$^\diamond$}
\author[V.\ Sidoravicius]{Vladas Sidoravicius$^\dagger$}
\address{$^*$Department of Mathematics, Stanford University, Building 380, 
Sloan Hall, Stanford, CA 94305, USA}
\address{$^{*\diamond}$Department of Statistics, Stanford University,
 Sequoia Hall, 390 Serra Mall, Stanford, CA 94305, USA}
\address{$^\dagger$IMPA, Estrada Dona Castorina 110, Jardim Botanico, 
Cep 22460-320, Rio de Janeiro, RJ, Brazil}

\begin{abstract}
For normally reflected Brownian motion and for simple random walk on independently growing 
in time $d$-dimensional domains,  $d\ge3$,  we establish a sharp criterion for 
recurrence versus transience in terms  of the growth rate. 
\end{abstract}

\maketitle

\section{Introduction.}

There has been much interest in studies of random walks in random environment 
(see \cite{HMZ}). Of particular challenge are problems in which the walker 
affects its environment, as in reinforced random walks. In this context even 
the most fundamental question of recurrence versus transience is often open. 
For  example, the recurrence of two dimensional  linearly reinforced random 
walk with large enough reinforcement strength has just been recently solved in 
\cite{ACK}, \cite{ST} (and for its extensions to other graphs see \cite{ACK}). The 
corresponding question by M. Keane for once reinforced random walk remains 
open. Moving to $\mathbb{Z}^d$, $d\ge3$, it was conjectured by the last author 
that  recurrence of once reinforced random walk is sensitive to the strength of the 
reinforcement.  
In contrast, the motion of walker excited towards the origin on the boundary of its 
range is recurrent in any dimension regardless of the strength of the excitation, 
see \cite[Section 2]{K1} and \cite{K2}, while as 
shown in \cite{BW}, excitation by means 
of a drift in $\vec{e}_1$ direction results in transience for any strength of 
the drift  in any dimension $d \geq 2$ (Cf. \cite{KZ} for results in one dimension,  
related excitation models, and open problems).
The case where the walk does not affect the time evolution of its environment 
is better understood. For example, time homogeneous, translation invariant 
Markovian evolution of the environment is considered in \cite{DKL} and the references 
therein.
The quenched \abbr{CLT} for the walk is proved there for stationary initial conditions 
subject to suitable locality, ellipticity, spatial and temporal  mixing of the 
environment.

Our focus is on the recurrence/transience properties of certain 
time-varying, highly non-reversible evolutions. Specifically, we
consider the discrete-time simple random walk (\abbr{srw}) $\{Y_t\}$ 
on non-decreasing connected graphs $\bG_t$ of common vertex set. 
Namely, having $Y_t = y$, one chooses $Y_{t+1}$ uniformly among 
all neighbors of $y$ within $\bG_{t+1}$. In this article we propose
three natural general conjectures about the recurrence/transience 
of such processes and prove partial results in this direction, 
for such \abbr{srw} on subgraphs of $\mathbb{Z}^d$, $d \ge 3$, 
which satisfy the following bounded-shape condition.
\begin{assumption}\label{ass-1} 
The connected, non-decreasing $t \mapsto \bD_t \subseteq \mathbb{Z}^d$,
$d \ge 3$ are such that  $f(t)
\bB_1 \cap \Z^d \subseteq \bD_t \subseteq f(t) \bB_c\cap \Z^d$, for some finite 
$c$ and non-decreasing, unbounded, strictly positive $f(t)$, $t \ge 0$
(and $\bB_c \subset \R^d$ denotes an Euclidean ball of radius 
$c$, centered at the origin $0 \in \Z^d$).
\end{assumption}
In this context, we propose the following {\em universality conjecture}
(namely, that only the asymptotic growth rate of $t \mapsto f(t)$ matters
for transience/recurrence of such \abbr{srw}).
\begin{cnj} \label{cj1} 
Almost surely, 
the \abbr{srw} $\{Y_t \}$ on $\{\bD_t\}$ 
satisfying Assumption \ref{ass-1} and starting at $Y_0=0$,
returns to the origin finitely often iff
\begin{equation}
J_f := \int_0^\infty \frac{dt}{f(t)^{d}} < \infty.
\end{equation}
\end{cnj}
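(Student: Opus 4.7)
My plan is to reduce the conjecture to sharp heat kernel estimates for the time-inhomogeneous walk $\{Y_t\}$. The guiding heuristic is that the one-step return probability behaves like
\[
P(Y_t = 0 \mid Y_0 = 0) \asymp (t \wedge f(t)^2)^{-d/2}:
\]
while $t \ll f(t)^2$ the walker has not yet felt the reflecting boundary of $\bD_t$ and looks like free \abbr{srw} on $\Z^d$, giving the Gaussian on-diagonal bound $t^{-d/2}$; once $t \gtrsim f(t)^2$ it has mixed to near-uniform on $\bD_t$, giving $|\bD_t|^{-1} \asymp f(t)^{-d}$. Summing and splitting at $t \sim f(t)^2$, the first regime contributes a convergent $\sum t^{-d/2}$ since $d \ge 3$, so the expected number of returns is finite iff $J_f < \infty$.

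First, I would establish the required two-sided heat kernel bounds in the time-homogeneous case: for any fixed $\bD \subseteq \Z^d$ of bounded-shape scale $R$, the reflected \abbr{srw} kernel satisfies $p_k^{\bD}(x,y) \le C\,(k \wedge R^2)^{-d/2}$ and, along the diagonal near $0$, a matching lower bound. These are standard consequences of the parabolic Harnack inequality / Nash–Moser machinery on uniformly inner domains of $\Z^d$, combined with a spectral gap of order $R^{-2}$ (Poincaré inequality with constant comparable to $\diam(\bD)^2$). The bounded-shape hypothesis in Assumption \ref{ass-1} is exactly what makes the constants uniform over the entire family $\{\bD_t\}$.

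Second, I would transfer these bounds to the time-inhomogeneous kernel. Since $t \mapsto \bD_t$ is non-decreasing (only new edges appear), a monotonicity / Dirichlet-form comparison gives that the inhomogeneous walk mixes at least as fast as the homogeneous one inside the smallest enclosing domain up to time $t$; for matching lower bounds on $P(Y_t=0)$ one restricts to the event that the walker has not left a fixed inner ball, reducing to the free-walk estimate. This yields the asserted two-sided bound on $P(Y_t=0)$ up to multiplicative constants.

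Third, I would run the Borel–Cantelli arguments. For the $(\Ra)$ direction (finitely many returns when $J_f<\infty$) the upper bound plus $d\ge 3$ gives $\sum_t P(Y_t=0) < \infty$, and (first) Borel–Cantelli closes the case. For $(\La)$, direct Borel–Cantelli fails, so I would control $N_T := \#\{t \le T: Y_t = 0\}$ by a second moment / Paley–Zygmund estimate: $E[N_T] \asymp \sum_{t\le T} f(t)^{-d} \to \infty$ and
\[
E[N_T^2] = 2\!\!\sum_{s<t \le T} P(Y_s=0)\,P(Y_t = 0 \mid Y_s = 0) + E[N_T].
\]
Using the restart of the Markov chain at time $s$ from $\de_0$ and the uniform mixing of $\{Y_r\}_{r\ge s}$ on $\bD_r$, I would show $P(Y_t=0 \mid Y_s=0) \le C\, P(Y_{t-s}=0 \mid Y_0^{(s)} = 0)$, leading to $E[N_T^2] \le C E[N_T]^2$ and hence $P(N_\infty = \infty)>0$; a zero–one / ergodic argument along the natural filtration promotes this to almost sure divergence.

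The main obstacle is the second-moment input in the recurrence direction: translating "the walker mixes on $\bD_s$" into an honest near-uniformity of the law of $Y_s$ conditioned on $\{Y_s=0\}$ and then propagating it up to time $t$ while $\bD$ keeps growing. This is where the bounded-shape assumption and the uniform Poincaré and Harnack constants are used most heavily, and where coupling to the Brownian analogue of the abstract should streamline the argument.
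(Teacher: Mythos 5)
The statement you set out to prove is explicitly labelled a \emph{conjecture} in the paper and is not proved there; Theorem~\ref{rwthm} only establishes it under additional hypotheses ($\bD_t=f(t)\bK\cap\Z^d$ for a star-shaped uniform domain $\bK$, and $f\in\cF_\ast$), so if your argument were correct it would be a substantial strengthening. Your heuristic $P(Y_t=0)\asymp(t\wedge f(t)^2)^{-d/2}$ is the right picture and is consistent with the paper's hitting-probability estimates (Lemmas~\ref{estimates} and~\ref{rw_estimates}, where the bound $\frac{T}{a^d}\wedge 1$ encodes exactly this). The transience direction via Fubini and the on-diagonal upper bound is also sound in spirit and parallels part~(a). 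But there are two genuine gaps that your plan glosses over, and both are exactly where the paper has to work hardest or explicitly stops short.

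First, the ``monotonicity / Dirichlet-form comparison'' you invoke to transfer homogeneous heat-kernel bounds to the time-inhomogeneous walk is not something one can just cite. For time-varying graphs there is no analogue of Rayleigh monotonicity: this is precisely the content of the paper's open Conjecture~\ref{cj2}, and Remark~1.6 gives an explicit example (with unbounded degree) where adding edges over time destroys transience. The paper avoids this by never comparing the inhomogeneous walk to a homogeneous one globally; instead it works with excursion counts between concentric shells (Lemma~\ref{estimates}/\ref{rw_estimates}), controls their log-MGFs via the universal exit-time tails of Lemmas~\ref{exptail}/\ref{rwtail} that hold \emph{uniformly over the whole family of scales}, and uses the invariance principle (Lemma~\ref{invariance}) plus Girsanov and coupling (Lemma~\ref{coupling}, Steps~V--VI) to pass from balls to general $\bK$. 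Your step asserting the inhomogeneous kernel is dominated by the homogeneous one on the current enclosing domain needs a proof, and it is unclear that it even holds at the level of generality you want.

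Second, and more fatally for the recurrence direction: Paley--Zygmund only gives $\P(N_\infty=\infty)>0$, and you then wave at a ``zero--one / ergodic argument'' to upgrade to probability one. But there is no obvious tail $\sigma$-field triviality here, because the environment is non-random and time-inhomogeneous, so the usual exchangeability/ergodicity machinery does not apply. The paper flags this explicitly: it notes that $\P(A)\in\{0,1\}$ in Conjecture~\ref{cj1} ``is not at all obvious'' and cites \cite[Example~4.5]{ABGK} where a deterministic non-increasing conductance profile on $\Z$ produces $0<\P(A)<1$. This is why the paper's recurrence proof does \emph{not} use a second-moment method; it establishes the \emph{conditional} lower bound $\P(\Gamma_{l+1}\mid\cG_l)\ge C^{-1}(\Delta T_l/a_l^d\wedge 1)$ almost surely, i.e.\ a lower bound on the hitting probability of $\bB_\epsilon$ during the next epoch \emph{given the entire past}, and then invokes the conditional Borel--Cantelli~II (\cite[Theorem~5.3.2]{Du}). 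That is a genuinely stronger input than a second-moment estimate on $N_T$: it removes the need for any zero--one law. Without either that conditional bound or a proof of the zero--one law, your recurrence argument does not close.

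In short: correct heuristic, plausible plan for part~(a), but the two load-bearing steps in part~(b) --- Dirichlet-form monotonicity across growing domains, and the zero--one law --- are precisely the open issues the paper identifies (Conjecture~\ref{cj2} and the remark citing \cite{ABGK}), and your proposal does not address them.
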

\noindent
Indeed, we show in Theorem \ref{rwthm} that under Assumption \ref{ass-1}, 
having $J_f < \infty$ implies that
$\mathbb{P} (A) = 0$ for $A:= \{ \sum_t \bI_{\{y\}}  (Y_t) = \infty   \}$
and any $y \in \mathbb{Z}^d$. For the more challenging  part, namely 
\begin{equation}\label{eq:Jf-inf}
J_f = \infty  \quad \Rightarrow \quad  \mathbb{P} (A) =1,
\end{equation}
we resort to connecting the \abbr{srw} $\{ Y_t \}$ with a normally reflected Brownian motion (in short \abbr{rbm}), via an invariance principle (see Lemma \ref{invariance}). Thus, our approach
yields sample-path recurrence results 
for reflected Brownian motion on growing domains in $\R^d$ 
(in short \abbr{rbmg}, see Definition \ref{rbmg} and 
Theorem \ref{bmthm}), which 
are of independent interest. This strategy comes however at a cost of imposing certain additional restrictions 
on $t \mapsto \bD_t$.
Specifically, when proving in part (b) of Theorem \ref{rwthm} the recurrence 
of the \abbr{srw} on growing domains $\bD_t$ in 
$\Z^d$, $d \ge 3$,
we further assume that 
$\bD_t = f(t) \bK \cap \Z^d$ for some $\bK$ regular enough,
to which end we recall the following definition.
\begin{defn}
An open connected  $\bK \subseteq \mathbb{R}^d$ is called a uniform domain 
if there exists a constant $C < \infty $ such that for every $x, y \in \bK$ there 
exists a rectifiable curve $\gamma \subseteq \bK$ 
joining $x$ and $y$, with 
$length(\gamma) \leq C|x-y|$ and 
$\min \{|x-z|, |z-y| \} \leq C {\rm{dist}} (z, \partial \bK)$ for all $z \in \gamma$.
\end{defn}
Dealing with a {\em{discrete time}} \abbr{srw},
we may consider without loss of generality only  
$t \mapsto f(t)$ {\em{piecewise constant}}, that is,
from the collection  
\begin{eqnarray}\label{eq:FF-def}
\mathcal{F}:=\{f(\cdot)\,:f(t)=\sum_{l=1}^\infty a_l\mathbb{I}_{[t_l, t_{l+1})}(t),
\text{ for } t_1=0, \, \{t_l\}\uparrow\infty,0 < a_l\uparrow\infty\}.
\end{eqnarray}
However, as seen in our main result below, for our 
proof of \eqref{eq:Jf-inf} we further require 
the following separation of scales
\begin{equation}\label{eq:def-Fsar}
\cF_\ast:=\{f \in \cF: \; (a_l - a_{l-1}) \uparrow \infty, 
\quad \sum_{l=1}^\infty a_l^{2-d} \log (1+a_l) <\infty \} \,.
\end{equation}
\begin{thm}
\label{rwthm}
Consider a \abbr{srw} $\{Y_t \}$ on $\{\bD_t\}$ 
satisfying Assumption \ref{ass-1}, with $Y_0=0$.  
\newline
(a). Whenever $J_f<\infty$, the \abbr{srw} $\{Y_t \}$ a.s. 
visits every $y \in \mathbb{Z}^d$ finitely often. 
\newline
(b). Such \abbr{srw} $\{Y_t\}$ a.s. visits every $y \in \Z^d$ infinitely often, 
in case $\bD_t = f(t) \bK \cap \Z^d$ with 
$f \in \cF_\ast$ such that $J_f=\infty$ and 
$\bK$ in 
\begin{equation}\label{eq:cK-def} 
\cK := \{\textrm{bounded uniform domain } \; \bK \subset \R^d : 
\; x \in \bK \; \Rightarrow \; 
\lambda x \in \bK 
\quad \forall \lambda\in [0,1]
\}\,.
\end{equation}
\end{thm}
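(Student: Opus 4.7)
My plan for part (a) is to combine a heat-kernel upper bound with the first Borel--Cantelli lemma. Concretely, I aim to show that there is $C=C(d,c)<\infty$ such that for every $y\in\Z^d$ and every $t\ge 1$,
\begin{equation*}
\mathbb{P}(Y_t = y) \;\le\; C\bigl(t^{-d/2} + f(t)^{-d}\bigr).
\end{equation*}
The Gaussian piece $t^{-d/2}$ accounts for the regime in which the walker has not yet felt the time-varying boundary: since $\bD_s\supseteq f(s)\bB_1\cap\Z^d\ni 0$ for every $s$ and $f$ is non-decreasing, a coupling with \abbr{srw} on $\Z^d$ combined with the classical on-diagonal heat-kernel estimate there controls $\mathbb{P}(Y_t=y,\,t<\tau_\partial)$, where $\tau_\partial$ is the first time the walker feels the boundary of $\bD_s$. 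The volume piece $f(t)^{-d}$ applies in the complementary regime, after which the walker has mixed across $\bD_t$; Assumption \ref{ass-1} gives both the volume bound $|\bD_t|\asymp f(t)^d$ and a uniform $d$-dimensional isoperimetric inequality on $\bD_t$ (since $\bD_t$ contains a Euclidean ball of radius $f(t)$), whence Nash/ultracontractivity for the frozen-time graph yields the desired on-diagonal bound. Summing in $t$, the hypothesis $d\ge 3$ gives $\sum_t t^{-d/2}<\infty$, while $\sum_t f(t)^{-d}\asymp J_f<\infty$. Hence $\mathbb{E}\sum_t\mathbf{1}_{\{y\}}(Y_t)<\infty$ and first Borel--Cantelli delivers part (a).

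For part (b) I pass to the continuum via Lemma \ref{invariance} and then invoke the \abbr{rbm} recurrence theorem \ref{bmthm}. The extra hypothesis $f\in\cF_\ast$ is essential here: on each plateau $[t_l,t_{l+1})$ the graph is the fixed $\bD_{t_l}=a_l\bK\cap\Z^d$, and the growing gap $a_l-a_{l-1}\uparrow\infty$, together with the uniform-domain regularity of $\bK\in\cK$, permits a scale-$a_l$ coupling of the walker on that plateau to an \abbr{rbm} on $\bK$. The summability condition $\sum_l a_l^{2-d}\log(1+a_l)<\infty$ is exactly what is needed to control the accumulated coupling error across the infinitely many plateaus via Borel--Cantelli. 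Stitching these plateau-by-plateau couplings yields an \abbr{rbm} $\{X_s\}$ on the growing continuum domain $s\mapsto f(s)\bK$ which tracks $\{Y_t\}$ closely; Theorem \ref{bmthm} (proved, in the usual way, via the Lamperti-style rescaling $Z_s:=X_s/f(s)$ that converts the divergence of $J_f$ into an integral test for recurrence of the time-changed process on the fixed domain $\bK$) then ensures the almost-sure return of $X_s$ to every neighborhood of $0$.

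It remains to promote this continuum-scale recurrence to infinitely many visits of the precise lattice point $y$. Since $\bD_t$ eventually contains any fixed $y\in\Z^d$, it suffices to treat $y=0$. The previous step produces an almost-sure infinite sequence of stopping times $\sigma_k\uparrow\infty$ with $Y_{\sigma_k}\in\ball{r}{0}\cap\Z^d$ for some fixed $r<\infty$, and from each such $\sigma_k$ the walker has probability at least $p_0(d,r)>0$ of reaching $0$ in at most $C(d,r)$ further steps, because $\bD_t\supseteq f(t)\bB_1\cap\Z^d$ already contains a full neighborhood of $0$ for all $t$. Spacing the attempts $\sigma_{k+1}\ge\sigma_k+C+1$ apart and applying a conditional second Borel--Cantelli argument completes the proof. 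The main obstacle in the whole program is the middle step, namely establishing both Lemma \ref{invariance} in sufficient generality and Theorem \ref{bmthm} for \abbr{rbm} on growing starlike uniform domains; these together form the technical core of the paper, and it is there that the sharp threshold $J_f$ appears naturally through the scaling behavior of the Neumann heat kernel on $\bK$ under the time-changed clock.
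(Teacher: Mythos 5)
Your plan for part (a) rests on the on-diagonal bound
$\mathbb{P}(Y_t=y)\le C\bigl(t^{-d/2}+f(t)^{-d}\bigr)$
for the time-inhomogeneous walk. The bound itself is plausible, but the justification you offer --- ``Nash/ultracontractivity for the frozen-time graph'' --- does not apply to a time-varying chain. Assumption~\ref{ass-1} gives a Nash inequality on each \emph{fixed} $\bD_t$, hence ultracontractivity for the time-homogeneous \abbr{srw} on $\bD_t$, not for the non-autonomous product $P_1P_2\cdots P_t$ whose behaviour at time $t$ depends on the full history of domains. Pushing a Nash inequality through such a composition requires its own parabolic iteration (or an evolving-sets argument), and that step is precisely what your sketch omits. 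The paper circumvents this by an excursion decomposition: in Step~II of the proof of Theorem~\ref{bmthm} and its discrete analogue in Section~\ref{sec3}, the path is split into nested annular sojourns $[\tau_l,\tau_{l+1})$ at scales $a_l=(c+1)^l$, and the probability of an interior visit to $0$ during each sojourn is bounded through the explicit hitting formulas \eqref{eq:beps} and \eqref{eq:beps2} together with the excursion-time tail bounds of Lemmas~\ref{exptail} and~\ref{rwtail}; the resulting series $\sum_l a_{l-1}^{-d}\Delta T_l$ converges precisely when $J_f<\infty$, and Borel--Cantelli~I finishes the argument, with Proposition~\ref{equiv} upgrading the conclusion from $y=0$ to every $y\in\Z^d$.

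Your part (b) plan has a larger gap. You propose a plateau-by-plateau path coupling of $\{Y_t\}$ with an \abbr{rbm} on $f(\cdot)\bK$ so that Theorem~\ref{bmthm} can be applied to the coupled partner, with $\sum_l a_l^{2-d}\log(1+a_l)<\infty$ cited as controlling accumulated coupling error. But Lemma~\ref{invariance} only furnishes weak convergence in Skorokhod space at a single scale; upgrading it to a quantitative strong approximation that survives stitching across infinitely many scales is a substantial theorem that is neither proved nor needed in the paper. Here the invariance principle is used solely to establish \emph{convergence of probabilities}: Lemma~\ref{rwtail} derives uniform excursion-time tail bounds for the \abbr{srw} on $a\bK\cap\Z^d$ from the Brownian bounds of Lemma~\ref{exptail}; Lemma~\ref{rw_estimates} then transfers the hitting estimates of Lemma~\ref{estimates} to the lattice; and the conditional Borel--Cantelli~II argument of Step~I in the proof of Theorem~\ref{bmthm} is \emph{rerun directly for the \abbr{srw}}, with no coupling to a continuum process anywhere. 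You also misread the role of $\sum_l a_l^{2-d}\log(1+a_l)<\infty$: it governs no coupling error, but rather removes the spurious time-gap assumption~\eqref{A1}. One restricts the conditional Borel--Cantelli sum to the scales $l$ on which $\Delta T_l$ is large enough for \eqref{lower_bound} to apply; the discarded contribution is at most $2M_d\sum_l a_l^{2-d}\log(1+a_l)<\infty$, so the retained sum still diverges. Finally, passing from the target $0$ to arbitrary $y\in\Z^d$ is not automatic for a time-inhomogeneous walk; the paper needs Proposition~\ref{equiv}, whose L\'evy-upward argument in Section~\ref{appen} handles it, rather than a simple conditional Borel--Cantelli step from $\ball{r}{0}$ to $y$.
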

\begin{remark}\label{rmk-sup-lin} Requiring $(a_l - a_{l-1}) \uparrow \infty$ 
results in $l \mapsto a_l$ super-linear, and hence in 
the series $\sum_l a_l^{2-d} \log (1+a_l)$
converging whenever $d \ge 4$ (so the latter 
restriction on $f \in \cF_\ast$ is relevant only for $d=3$). We need $\bK$ to be a uniform domain only
for the invariance principle of Lemma \ref{invariance},
and impose on $\bK$ the star-shape condition of 
\eqref{eq:cK-def} merely to guarantee that 
the corresponding sub-graphs $t \mapsto \bD_t$ are non-decreasing.
\end{remark}

\smallskip
One motivating example for our study is the \abbr{srw} $\{Y_t\}$ on the independently 
growing Internal Diffusion Limited Aggregation (\abbr{idla}) 
cluster $\bD_t$, formed by particles injected at the origin according 
to a Poisson process of bounded away from zero intensity  $\lambda(t)$, 
and independently performing \abbr{srw} with jump-rate $v$.
While the microscopic boundary of such \abbr{idla} cluster $\bD_t$ 
is rather involved, it is well known (see \cite{LBG}), 
that $M_t^{-1/d} \bD_t \to \bB_{\kappa}$,  where $M_t$  
denotes the number of particles reaching the \abbr{idla} cluster 
boundary by time $t$, and the value of $\kappa = \kappa_d$ is chosen 
such that $\bB_{\kappa}$ has volume one. Consequently, from
part (a) of Theorem \ref{rwthm} we have that
\begin{cor}\label{cor-idla}
The \abbr{srw} on such \abbr{idla} clusters is a.s.
transient when the random variable 
$J := \int_1^\infty M_t^{-1}dt$ is finite.
\end{cor}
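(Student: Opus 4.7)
The plan is to apply part (a) of Theorem~\ref{rwthm} quenched on the (random) \abbr{idla} environment. The two steps we need to carry out are: (i) verify that Assumption~\ref{ass-1} holds almost surely for $\{\bD_t\}$, with a function $f$ whose growth rate matches $M_t^{1/d}$; and (ii) show that the corresponding $J_f$ is finite on the event $\{J<\infty\}$.

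For step (i) we invoke the \abbr{idla} shape theorem of \cite{LBG}, which gives that almost surely $M_t^{-1/d}\bD_t$ converges in Hausdorff distance to $\bB_\kappa$, where $\kappa$ is chosen so that $\mathrm{vol}(\bB_\kappa)=1$. Consequently there is an a.s.\ finite random time $T_0$ and random constants $0<c_-<\kappa<c_+<\infty$ such that
\[
 c_-M_t^{1/d}\bB_1\cap\Z^d \;\subseteq\; \bD_t \;\subseteq\; c_+M_t^{1/d}\bB_1\cap\Z^d \qquad \text{for all } t\ge T_0.
\]
We would set $f(t):=c_-(M_t\vee 1)^{1/d}$ on $t\ge T_0$ and extend $f$ to $[0,T_0]$ as a non-decreasing, strictly positive function bounded above by $c_-M_{T_0}^{1/d}$; since $\bD_t$ is finite at each $t$, the outer inclusion on this pre-asymptotic window can be accommodated by enlarging $c:=c_+/c_-$ if needed. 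Monotonicity and unboundedness of $f$ follow from those of $M_t$, the latter using the injection-rate lower bound $\lambda(t)\ge\lambda_0>0$ together with the a.s.\ finite settling time of each injected particle (as every cluster $\bD_t$ is finite).

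Step (ii) is a direct computation:
\[
 J_f \;=\; \int_0^\infty f(t)^{-d}\,dt \;\le\; C_0 + c_-^{-d}\!\int_{T_0}^\infty\! M_t^{-1}\,dt \;\le\; C_0 + c_-^{-d}\,J,
\]
which is finite on the a.s.\ event where both the shape theorem holds and $J<\infty$. Conditioning on the \abbr{idla} environment, Theorem~\ref{rwthm}(a) then gives that the \abbr{srw} a.s.\ visits every $y\in\Z^d$ only finitely often; removing the conditioning yields the a.s.\ transience claimed.

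The principal (and only) obstacle is cosmetic: because the shape theorem is only asymptotic, $f$ must be doctored on the pre-asymptotic window $[0,T_0]$ to meet the global monotonicity, strict positivity, and sandwich requirements of Assumption~\ref{ass-1}. This is routine bookkeeping, enabled by $|\bD_t|<\infty$ at each $t$. Modulo this adjustment, Corollary~\ref{cor-idla} is immediate from Theorem~\ref{rwthm}(a) combined with the \abbr{idla} shape theorem.
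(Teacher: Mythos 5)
Your argument is correct and follows exactly the route the paper envisions (and leaves implicit): invoke the Lawler--Bramson--Griffeath shape theorem to obtain, a.s., a non-decreasing unbounded $f$ with $f(t)\asymp M_t^{1/d}$ satisfying Assumption~\ref{ass-1}, observe that $J_f<\infty$ on $\{J<\infty\}$, and apply Theorem~\ref{rwthm}(a) quenched on the \abbr{idla} environment. The pre-asymptotic adjustment of $f$ on $[0,T_0]$ you flag is indeed routine and correctly handled.
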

\noindent
Further, our analysis (i.e. part (b) of Theorem \ref{rwthm}), 
suggests the a.s. recurrence of the \abbr{srw} on such 
\abbr{idla} clusters, whenever $J = \infty$ (this is 
also a special case of Conjecture \ref{cj1}).
\begin{remark} In our \abbr{idla} clusters example,
let $g(t) := \int_0^t \lambda(s)ds$ denote the mean of the Poisson 
number of particles $N_t$ injected at the origin by time $t$. Then, a.s. $J < \infty$ iff 
\begin{equation} 
\label{1.1}
\widehat J := \int_1^\infty g(t)^{-1} dt < \infty \,.
\end{equation}
Indeed, clearly $M_t \le N_t$ and for large $t$ the Poisson 
variable $N_t$ is concentrated around $g(t)$. Our claim thus 
follows immediately when $v \to \infty$, for then 
one has further that $M_t \uparrow N_t \sim g(t)$. More 
generally, for $v$ finite and $t \gg 1$, the variable $M_t$ is still concentrated, 
say around some non-random $u(t)$, which is roughly comparable to 
$N_{t- cu(t)^{2/d}}$ for some $c = c(v)$, and thereby also to 
$g(t-c u(t)^{2/d})$. Solving for 
\begin{equation}
u(t) := \sup \{u: \, g(t-cu^{2/d}) \geq u \} \notag
\end{equation}
it is easy to check that $u(2t) \ge g(t)\wedge(t/c)^{d/2}$, hence for $d\geq 3$
\begin{equation}
\int_1^\infty u(t)^{-1} dt < \infty \quad \text{\it{iff}} \quad \widehat J < \infty. \notag
\end{equation}
\end{remark}

\medskip
Next, considering part (b) of Theorem \ref{rwthm} for $\bK = \bB_1$ we see that (at least subject to our conditions
about $\{a_l\}$), Conjecture \ref{cj1} is a consequence 
of the more general {\em monotonicity conjecture}:
\begin{cnj} 
\label{cj2}
Suppose non-decreasing in $t$ graphs ${\bG}_t , \;  {\bG}'_t$ of uniformly bounded
degrees are such that 
${\bG}_t \subseteq {\bG}'_t$ for all $t$, and the 
\abbr{srw} $\{ Y_t \}$ on $\{ \bG_t \}$ 
is transient, i.e. its sample-path a.s. returns 
to $Y_0=y_0$ finitely often. Then, the same
holds for the sample path of the \abbr{srw} 
$\{ Y'_t \}$ on $\{ {\bG}'_t \}$, starting at $Y_0'=y_0$.
\end{cnj}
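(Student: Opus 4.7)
The plan is to first attempt a direct Markovian coupling of $\{Y_t\}$ and $\{Y'_t\}$. When both walks occupy the same vertex $y$ at time $t$, draw $Y'_{t+1}$ uniformly from the $\bG'_{t+1}$-neighbors of $y$, and set $Y_{t+1}=Y'_{t+1}$ provided the chosen neighbor already lies in $\bG_{t+1}(y)$; otherwise generate $Y_{t+1}$ from an independent uniform draw over $\bG_{t+1}(y)$. Until the first desynchronization the two processes agree, but thereafter they wander essentially independently, so a single such coupling cannot transfer transience. I would therefore iterate the coupling at regeneration times (e.g.\ visits of $Y'_\cdot$ to $y_0$) and try to bound the total return count of $Y'_\cdot$ by that of $Y_\cdot$ via a renewal-type argument, which itself requires an a priori comparison of excursion-length tails on the two graphs.

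A more structural approach lifts both chains to the directed space-time graph $\wh{\bG}$ with vertex set $\bigcup_t (\bG_t \times \{t\})$ and directed edges $(y,t)\to(z,t+1)$ of capacity $1/|\bG_{t+1}(y)|$ for each neighbor $z$ of $y$ in $\bG_{t+1}$. Transience of the \abbr{srw} on $\{\bG_t\}$ amounts to the existence of a unit directed flow from $(y_0,0)$ to infinity compatible with these transition probabilities, whose projected visit-count at every spatial vertex $y$ is summable. Enlarging $\bG_t$ to $\bG'_t$ enlarges the edge set of $\wh\bG$, and if a Rayleigh-type monotonicity held in this directed setting the conclusion would be immediate. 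Since directed networks are known not to satisfy unconditional monotonicity, any proof must exploit the special uniform-transition structure that each vertex redistributes its mass uniformly among its current out-neighbors, presumably by a single-edge-addition induction together with a \emph{local} monotonicity lemma at each vertex.

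The main obstacle is precisely this tension: adding one edge at time $t$ simultaneously opens a new escape route for $Y'_\cdot$ and rescales the outgoing probabilities at the edge's tail vertex, so that near $y_0$ the rescaling may channel the walker back toward the origin while far from $y_0$ the same new edge aids escape. Disentangling these competing effects uniformly in space appears to require quantitative heat-kernel bounds on general bounded-degree time-varying graphs, for which no standard machinery is available; and because $\{Y'_t\}$ admits no reversible stationary measure, the classical variational characterisation of transience via finite-energy flows does not apply. I therefore expect that a full proof will have to proceed either through a genuinely new directed-network monotonicity principle or through a two-scale coupling that synchronises the walks on long scales while permitting independent exploration on short scales, and I would not attempt more than the direct coupling step above without first resolving one of these two issues.
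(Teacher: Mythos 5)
This statement is not a theorem in the paper at all: it is explicitly labeled Conjecture~\ref{cj2}, and the authors offer no proof of it. Indeed, the paper's remarks surrounding the conjecture emphasize that it is open, that it \emph{fails} if one allows unbounded degrees (they give an explicit counterexample on $\Z^3$), and that only a much weaker special case (for $\bG_\infty=\Z_+$, via~\cite{ABGK}) is known. So there is no ``paper's own proof'' to compare against, and you should not expect to find one.

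Your exploration is essentially sound as far as it goes: you correctly identify that a na\"ive step-by-step coupling decouples at the first disagreement, that the process is non-reversible so the Rayleigh/electrical-network machinery and flow variational characterizations do not apply to a time-inhomogeneous \abbr{srw}, and that single-edge addition has the competing effects of opening escape routes and reweighting outgoing probabilities. These are precisely the reasons the authors leave the statement as a conjecture rather than a theorem. However, you should be explicit that you have not produced a proof; a reader of your write-up could be misled by the heading ``proof proposal'' into thinking one is forthcoming. The honest conclusion --- which you arrive at in your last sentence --- is that the statement is open and that the techniques you considered do not resolve it. That is the correct assessment. If you want to go further, a productive direction would be to verify the conjecture in restricted settings (e.g. where $\bG'_t$ differs from $\bG_t$ by finitely many edges, or where $\bG_\infty$ is a tree as in~\cite[Theorem~5.1]{ABGK}), rather than attempting the general case directly.
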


\begin{remark}
By Rayleigh monotonicity principle, Conjecture \ref{cj2}  trivially holds 
whenever $\bG_t$ and ${\bG}'_t$ do not depend on $t$. 
%
However, beware that it may fail when the graphs depend on $t$ and unbounded degrees are allowed. For example, 
on $\bG_t = \Z^3$ the \abbr{srw} is transient, but 
we can force having a.s. infinitely many returns to $0$ 
by adding to the edges of $\Z^3$, 
at times $t_k \uparrow \infty$ fast enough, 
edges in $\bG'_{t}$, $t \ge t_k$, between $0$ 
and each vertex in a wide enough annulus 
$\bA_k := \{x \in \Z^3 :  \|x\|_2 \in [r_k,R_k)\}$
(specifically, with $r_k \ll R_k$ suitably chosen 
to make sure the \abbr{srw} on $\bG'_{t}$ is at 
times $t_k$ in $\bA_k$ and thereby force at least 
one return to zero before exiting $\bA_k$,
while $t_{k-1} \ll t_k$ gives separation of scales).  
\end{remark}

As shown for example in Theorem \ref{rwthm}, when the \abbr{srw} on 
the limiting graph $\bG_{\infty}$ is transient, 
one may still get recurrence
by imposing slow enough growth on $\bG_t$. 
In contrast, whenever the \abbr{srw} 
on  $\bG_{\infty}$ is recurrent, we have the following consequence of 
the Conjecture \ref{cj2}.
\begin{cnj} \label{cj3}
If \abbr{srw} $\{ Y_t \}$ on a fixed graph $\bG_{\infty}$ 
of uniformly bounded degrees is recurrent, then the 
same applies to \abbr{srw} on non-decreasing $\bG_t \subseteq \bG_{\infty}$,  
for {\em any choice} of $\bG_t \uparrow \bG_{\infty} $.
\end{cnj}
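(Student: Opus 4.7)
The plan is to derive Conjecture~\ref{cj3} from Conjecture~\ref{cj2} by contraposition, exactly as the phrasing in the paper hints. Given a non-decreasing sequence $\bG_t \uparrow \bG_\infty$ and an \abbr{srw} $\{Y_t\}$ on $\{\bG_t\}$ starting at $Y_0 = y_0$, I would introduce the constant-in-time comparison sequence $\bG'_t \equiv \bG_\infty$, so that trivially $\bG_t \subseteq \bG'_t$ for every $t$, with uniformly bounded degrees inherited from $\bG_\infty$. The \abbr{srw} $\{Y'_t\}$ on $\{\bG'_t\}$ is then just the time-homogeneous \abbr{srw} on $\bG_\infty$, which by hypothesis a.s.\ visits $y_0$ infinitely often, hence it is \emph{not} transient in the sense of Conjecture~\ref{cj2}. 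Granted Conjecture~\ref{cj2}, its contrapositive forces $\{Y_t\}$ on $\{\bG_t\}$ also to visit $y_0$ infinitely often a.s., which is precisely Conjecture~\ref{cj3}.

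The main obstacle is therefore Conjecture~\ref{cj2} itself. In the purely time-independent case it follows from Rayleigh monotonicity together with the identification of recurrence with divergence of the effective resistance to infinity, but the electric-network picture breaks down once the conductances depend on time. A naive coupling attempt in which $Y'_t$ tries to mimic $Y_t$ whenever the latter sits in $\bG_t$ also fails: the first time $Y'_t$ uses an edge of $\bG'_t \setminus \bG_t$ the two walks decouple, and on $\bG'_t$ the extra edges can drive $Y'_t$ to regions inaccessible to $Y_t$, so one has no obvious way to keep them synchronized or to compare their return times directly.

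A more direct attack on Conjecture~\ref{cj3}, bypassing Conjecture~\ref{cj2}, would try to show $\sum_t \mathbb{P}(Y_t = y_0) = \infty$ and then apply a conditional Borel--Cantelli / second moment argument, using the Markov structure at successive visits to $y_0$. The natural comparison is with the heat kernel $p^\infty_t(y_0, y_0)$ of $\bG_\infty$, whose sum diverges by recurrence. The hard point is that removing edges can either raise or lower return probabilities, so no monotone pointwise bound of $\mathbb{P}(Y_t=y_0)$ against $p^\infty_t(y_0, y_0)$ is available without extra structure. One would instead localize around $y_0$ and exploit that, up to a suitable stopping time, the walk remains within a ball already fully contained in $\bG_t$ once $t$ is large. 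Making this localization quantitative, without additional geometric hypotheses on $\bG_\infty$ beyond bounded degree and recurrence, is where I expect the real difficulty to lie, which is presumably why the authors isolate the statement as a conjecture rather than a theorem.
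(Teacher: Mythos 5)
Your reduction of Conjecture~\ref{cj3} to Conjecture~\ref{cj2} via the constant comparison sequence $\bG'_t \equiv \bG_\infty$ is precisely the observation the paper makes (``we have the following consequence of the Conjecture~\ref{cj2}''), and since the statement is left open in the paper, there is no further proof to compare against. Your additional remarks on why Conjecture~\ref{cj2} resists both the electric-network and coupling approaches, and on the obstruction to a direct heat-kernel/Borel--Cantelli attack, are sensible and consistent with the paper's decision to leave both statements as conjectures.
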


\noindent In particular, Conjecture \ref{cj3} 
implies that the \abbr{srw} 
on {\em{any}} non-decreasing $\bD_t \subseteq \mathbb{Z}^2$ is recurrent. We note in passing 
that monotonicity of $t \mapsto \bD_t$  is necessary for the latter statement 
(hence for Conjectures \ref{cj2} and  \ref{cj3}).
Indeed, with  $\bD_t$ being $\mathbb{Z}^2 $ without edges $(x,y)$  for $|| x||_1 = t$ 
and  $|| y ||_1 = t-1$, we have $\bD_t \to \mathbb{Z}^2$ as $t \to \infty$, while 
$||Y_t||_1 = t$ for all $t$.

\begin{remark} Conjecture \ref{cj3} was proposed 
to us by J. Ding and upon completing this manuscript 
we found a more general version of it in \cite{ABGK}. Specifically, \cite{ABGK} conjecture that a 
random walk $\{Y_t\}$ on graph $\bG_\infty$ 
with non-decreasing edge conductances 
$\{c_t(e)\}$ is recurrent as soon as the walk on $(\bG_\infty,\{c_\infty(e)\})$ is recurrent
(Conjecture \ref{cj3} is just its restriction 
to $\{0,1\}$-valued conductances). This is  
proved for $\bG_\infty$ a tree 
(by potential theory, see \cite[Theorem 5.1]{ABGK}).
A weaker version of Conjecture \ref{cj2} is also 
proposed there (and confirmed in \cite[Theorem 4.2]{ABGK} 
for $\bG_\infty=\mathbb{Z}_+$), whereby 
the transience of the walk on
$(\bG_\infty,\{c_t(e)\})$ is 
conjectured to hold whenever
the walk on $(\bG_\infty,\{c_0(e)\})$ and 
the walk on $(\bG_\infty,\{c_\infty(e)\})$ 
are both transient. Finally, we note in passing that the 
zero-one law $\mathbb{P}(A) \in \{0,1\}$ in 
Conjecture \ref{cj1} is not at all obvious
given \cite[Example 4.5]{ABGK}, where $0<\mathbb{P}(A)<1$
for some random walk on $\mathbb{Z}$ with certain
non-random, non-increasing $c_t(e) \in (0,1]$. 
\end{remark}

\begin{remark} Recall \cite{GKZ} that the \abbr{srw}
on the infinite cluster $\bD_0$ of Bernoulli bond 
percolation on $\mathbb{Z}^d$ is a.s. 
recurrent for $d=2$ and transient for any $d \ge 3$.
Hence, by Conjectures 
\ref{cj2} 
and 
\ref{cj3} 
the same should apply to the \abbr{srw} on
any independently growing domains 
$\bD_t \supseteq \bD_0$. 
Whereas the latter is an open problem, by 
\cite[Theorem 1.1]{Ke}
such conclusion trivially holds when $\bD_t$ 
is the set of vertices connected to the origin 
by time $t$ in First-Passage Percolation 
with finite, non-negative i.i.d. passage 
times on $\mathbb{Z}^d$, subject only 
to the mild moment condition \cite[(1.6)]{Ke}.
\end{remark}

\smallskip
We consider also Brownian motions on growing domains, as defined next.
\begin{defn}
\label{rbmg}
We call $(W_t,\bD_t)$ reflected  Brownian motion on growing domains 
(\abbr{RBMG}), if the non-random, monotone non-decreasing 
$\bD_t\subseteq\mathbb{R}^d$  are such that the normally reflected 
Brownian motion $W$  on the time-space domain 
$\mathcal{D}:=\{(t,x)\in\mathbb{R}^{d+1}:x\in \bD_t\}$ 
is a well-defined strong Markov process solving the corresponding 
deterministic Skorohod  problem. That is, for any 
$(s,x) \in \overline{\mathcal{D}}$ there is a unique pair of continuous 
processes $(W, L)$ adapted to the minimal admissible filtration of 
Brownian motion $\{U_t\}_{t \geq 0}$, with $L$ non-decreasing, such 
that for any $t \geq s$, both $(t,W_t) \in \overline{\mathcal{D}}$ and
\begin{align}
\label{skorohod}
W_t &= x + U_t - U_s + \int_s^t {\bf n}(u, W_u) dL_u\,, \\
L_t  &= \int_s^t \mathbb{I}_{\partial \mathcal{D}} (u, W_u) dL_u 
\,,
\notag
\end{align}
where $\mathbf{n}(u,y)$ denotes the inward normal unit  vector at 
$y\in\partial \bD_u$.
\end{defn}

As shown in  \cite [Theorem 2.1 and 2.5]{BCS}, Definition \ref{rbmg} 
applies when  $\partial \cD$ is $C^3$-smooth with  
$\mathbf{\gamma}(t,x) \cdot(0,\mathbf{n}(t,x))$ 
bounded away from zero uniformly 
on compact time intervals, where $\mathbf{\gamma}(t,x)$ 
denotes the inward 
normal unit vector at $(t,x)\in\partial \mathcal{D}$. 
Focusing on $\bD_t=f(t)\bK$ 
this condition holds whenever both $f(t)$ and 
$\partial \bK$ are $C^3$-smooth. 
Further, this construction easily extends to handle 
isolated jumps in $t \mapsto f(t)$.

%

\medskip
In the context of $\R^d$-valued stochastic 
processes, we define recurrence as follows: 

\begin{defn}
\label{rec}
The sample path $x_t$ of a stochastic process 
$t\mapsto x_t \in \bD_t$ with $x_0=0$, is called recurrent, 
if it makes infinitely many excursions to $\bB_{\epsilon}$ for any $\epsilon>0$, and  
is called transient otherwise. That is, recurrence amounts to the event 
$A:=\cap_{\epsilon>0}A_\epsilon$, where 
\begin{align*}
\sigma_\epsilon^{(0)}&:=\inf\{t\ge0: \bD_t\supseteq \bB_\epsilon\},\;\; \\
\tau_\epsilon^{(i)}&:=\inf\{t\ge\sigma_\epsilon^{(i-1)}: |x_t|<\epsilon\},\;\; i\ge1\\ 
\sigma_\epsilon^{(i)}&:=\inf\{t\ge\tau_\epsilon^{(i)}: |x_t|>1/2\},\\
A_\epsilon&:=\{\tau_\epsilon^{(i)}<\infty, \forall i\}.
\end{align*}
\end{defn}

\begin{thm}
\label{bmthm} 
Suppose $\bB_{f(t)} \subseteq \bD_t\subseteq \R^d$,
$d \ge 3$, and $t \mapsto f(t)$ is
positive, non-decreasing.
\newline
(a). The sample path of the \abbr{rbmg} $(W_t, \bD_t)$ 
is a.s. transient whenever $J_f < \infty$.
\newline
(b). The sample path of the \abbr{rbmg} $(W_t, \bD_t)$ 
is a.s. recurrent whenever $J_f=\infty$, provided 
$\bD_t = f(t)\bK$ for 
$C^3$-smooth up to isolated 
jump points $t \mapsto f(t)$ such that 
$\int_0^\infty f'(s)^2 ds$ is finite and 
$\bK \in \cK$ of
$C^3-$smooth boundary $\partial \bK$.
\end{thm}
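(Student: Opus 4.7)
The two parts require distinct strategies. For part (a), I would establish the heat-kernel bound $p_t^{\bD_t}(0,\cdot)\lesssim \min(t^{-d/2},f(t)^{-d})$. This uses domain monotonicity of the Neumann heat kernel, namely $p_t^{\bD_t}(0,0)\le p_t^{\bB_{f(t)}}(0,0)$ because the larger reflecting domain spreads mass more, together with standard comparison with free Brownian motion (valid for $t\lesssim f(t)^2$) and with the uniform invariant measure on $\bB_{f(t)}$ (valid for $t\gtrsim f(t)^2$). For $d\ge 3$ this upper bound is integrable, and
\begin{equation*}
\int_0^\infty\min(t^{-d/2},f(t)^{-d})\,dt\lesssim 1+J_f<\infty,
\end{equation*}
so by Fubini the expected occupation time in any $\bB_\epsilon$ is finite. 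A uniform positive-probability lower bound on the time spent in $\bB_\epsilon$ per excursion then upgrades this to a.s.\ finitely many excursions, establishing transience. The technical headache here is making the time-inhomogeneous domain-monotonicity comparison rigorous when only $\bB_{f(t)}\subseteq\bD_t$ is assumed.

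For part (b), the strategy is to rescale to a pure RBM on the fixed reference domain $\bK$ and apply Girsanov. Set $\tilde W_\tau:=W_t/f(t)$ with time change $\tau=A(t):=\int_0^t f(s)^{-2}\,ds$; since $f\ge 1$ eventually and $J_f=\infty$ force $\int f^{-2}=\infty$, $A$ is a bijection of $[0,\infty)$. Because $\bK\in\cK$ is star-shaped, the inward normal on $\partial(f(t)\bK)$ at $f(t)\tilde y$ coincides with the inward normal to $\bK$ at $\tilde y$, so It\^o's formula yields
\begin{equation*}
d\tilde W_\tau = d\tilde B_\tau - g(\tau)\tilde W_\tau\,d\tau + \mathbf{n}(\tilde W_\tau)\,d\tilde L_\tau,
\end{equation*}
i.e.\ RBM on $\bK$ with an Ornstein--Uhlenbeck-type drift toward the origin of strength $g(\tau):=f'(t)f(t)|_{t=A^{-1}(\tau)}$. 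The hypothesis $\int_0^\infty f'(s)^2\,ds<\infty$ rewrites as $\int_0^\infty g(\tau)^2\,d\tau<\infty$; since $|\tilde W|\le\mathrm{diam}(\bK)$, Novikov's criterion holds and
\begin{equation*}
M_\tau:=\exp\Bigl(\int_0^\tau g(s)\tilde W_s\cdot d\tilde B_s - \tfrac12\int_0^\tau g(s)^2|\tilde W_s|^2\,ds\Bigr)
\end{equation*}
is a uniformly integrable martingale, so $d\tilde\P:=M_\infty\,d\P$ defines a probability measure on $\mathcal{F}_\infty$ equivalent to $\P$, under which $\tilde W$ is a pure RBM on $\bK$.

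Under $\tilde\P$ the process $\tilde W$ is positive-recurrent and ergodic in the bounded uniform domain $\bK$; its excursions from $\partial\bB_{r_0/2}$ (for any fixed $\bB_{r_0}\subset\bK$) occur at rate $\Theta(1)$ and each hits a centered ball $\bB_\delta$ with probability $\Theta(\delta^{d-2})$ by the standard capacity estimate in $d\ge 3$. Setting $\delta(\tau):=\epsilon/f(A^{-1}(\tau))$, so that $|W_t|<\epsilon\iff|\tilde W_{A(t)}|<\delta(A(t))$, the substitution $\tau=A(t)$ gives
\begin{equation*}
\int_0^T\delta(\tau)^{d-2}\,d\tau = \epsilon^{d-2}\int_0^{A^{-1}(T)}f(t)^{-d}\,dt\;\longrightarrow\;\epsilon^{d-2}J_f=\infty.
\end{equation*}
Exponential mixing of pure RBM in $\bK$ renders excursions separated by unit $\tau$-time essentially independent, so a second-moment calculation (or the second Borel--Cantelli lemma) delivers infinitely many such excursions $\tilde\P$-a.s., and hence $\P$-a.s.\ by equivalence. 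Translating back, $|W_t|=f(t)|\tilde W_{A(t)}|$ dips below $\epsilon$ infinitely often while trivially exceeding $1/2$ between (as $\tilde W$ traverses the full domain $\bK$), which is exactly recurrence per Definition \ref{rec}. The most delicate step I anticipate is quantifying the decay of correlation between excursion events across distant $\tau$-scales well enough to close the Borel--Cantelli argument; a secondary nuisance is the isolated jumps of $f$, where $\tilde W$ acquires an instantaneous inward jump---the star-shape of $\bK$ ensures such jumps assist rather than hinder recurrence, and the $L^2$ hypothesis on $f'$ controls their accumulation.
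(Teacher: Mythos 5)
Your two parts have quite different statuses when compared against the paper's proof.

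\textbf{Part (a).} Here you have a genuine gap. The central reduction $p_t^{\bD_t}(0,\cdot)\lesssim p_t^{\bB_{f(t)}}(0,\cdot)$ is not a theorem: domain monotonicity of the \emph{Neumann} heat kernel (Chavel's conjecture) is known to fail in general, even for nested convex domains. A correct version does exist when the smaller domain is a ball centered at $0$ contained in the larger one --- this is Pascu's mirror coupling theorem, which the paper does invoke, but only for \emph{fixed} domains and only in Step V of its proof. What your argument actually needs is a \emph{time-inhomogeneous} monotone comparison, along the entire path, between the \abbr{rbmg} on $\bD_t$ and the \abbr{rbmg} on $\bB_{f(t)}$. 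This is exactly the content of the paper's Lemma \ref{coupling}(a), proved there not through any heat-kernel inequality but through an explicit pathwise construction (using rotations to re-synchronize the two processes whenever their radii agree, together with a stochastic domination of the inter-synchronization times). The ``technical headache'' you flag is in fact the whole theorem; the heat-kernel route does not obviously supply it, and I do not see how to close the gap without effectively reproving the coupling lemma. Once that coupling is in hand, the paper's actual part (a) argument (excursion decomposition $[\tau_l,\tau_{l+1})$, hitting bounds from Lemma \ref{estimates}, first Borel--Cantelli) achieves the same end as your Fubini-on-occupation-time bound, and somewhat more explicitly.

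\textbf{Part (b).} Your Girsanov setup matches the paper's Step V essentially exactly: same rescaling $W_t/f(t)$, same time change $\tau(t)=\int_0^t f(s)^{-2}\,ds$, same observation that the resulting Ornstein--Uhlenbeck-type radial drift has quadratic variation $\int_0^\infty f'(s)^2\,ds$, same use of Novikov. Where you genuinely diverge is in what you do after the measure change. The paper does \emph{not} analyze the pure \abbr{rbm} on $\bK$ directly. It first proves the recurrence estimate on growing balls (Step I, reduced to Lemma \ref{estimates} which exploits the exact Green's-function formula \eqref{eq:beps} and rotational symmetry to make the excursion lengths radial functionals), then pushes the ball result onto $\bK$ by sandwiching $\bK\subseteq\bB_c$ and applying Pascu's mirror coupling a second time to conclude that ``$\bK$ is at least as recurrent as $\bB_c$.'' You bypass both the ball case and Pascu's comparison, going straight to a capacity-plus-mixing argument for \abbr{rbm} on $\bK$ itself. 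This is a plausible and arguably cleaner route, but the cost is that the hard estimates of Lemma \ref{estimates} --- sharp upper and lower bounds on the probability of reaching a shrinking ball during an excursion of prescribed length, uniform in scale, with Gaussian-type tails on the excursion durations --- now have to be reproved on the general uniform domain $\bK$. That is not free: the explicit formula \eqref{eq:beps} is replaced by a two-sided Green function comparison, the radiality of $m(\theta,\cdot)$ is lost, and the mixing/decorrelation step you flag as ``the most delicate'' is indeed where the work lies (the paper handles the analogous correlation structure by the explicit strong Markov factorization across the excursion $\sigma$-algebra). Your handling of the isolated jumps of $f$ is sound in spirit; the paper's Step VI spells out the required inductive coupling argument across jump times (using Lemma \ref{coupling}(b) to re-couple inside each inter-jump interval), whereas you assert it heuristically via the star-shape of $\bK$. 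In summary, part (b) is a genuinely different and viable route that trades the ball-reduction-plus-Pascu step for harder direct hitting estimates on $\bK$; the core Girsanov mechanism is identical.
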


\begin{remark}
In part (a) of Theorem \ref{bmthm} we implicitly assume 
that the \abbr{rbmg} $(W_t,\bD_t)$ is well defined,
in the sense of Definition \ref{rbmg}. Since $J_f=\infty$ 
whenever $f(t)$ is bounded, in which case 
part (b) trivially holds,
we assume throughout 
that $f(t)$ is unbounded. 
The condition  
$\int_0^\infty f'(s)^2 ds < \infty$ is needed in part (b) 
only for $\bK \ne \bB_r$, and it holds for example whenever 
$f(\cdot)$ is piecewise constant, or in case 
$f(s)=(c+s)^\alpha$ for some $c>0$ and $\alpha \in [0,1/2)$. 
\end{remark}

\medskip
We prove Theorem \ref{bmthm} in Section \ref{sec2}
and Theorem \ref{rwthm}
in Section \ref{sec3},
whereas in Section \ref{appen} we show that 
in the context of Conjecture \ref{cj1}, 
if recurrence/transience occurs a.s. 
with respect to the origin, then the same 
applies at any other point.


\section{Proof of Theorem \ref{bmthm}}\label{sec2}

\bigskip

Since the events $A_\epsilon$ are non-decreasing in $\epsilon$, it suffices 
for Theorem \ref{bmthm} 
to show that 
$q_\epsilon:=\mathbb{P}(W\in A_\epsilon)=\mathbb{I}_{\{J_f=\infty\}}$ 
for each fixed $\epsilon>0$. To this end we require
the following three lemmas.


\begin{lem}\label{coupling}
Suppose $|x_1|\le|x_2|$ and
for some positive $g\uparrow\infty$, and constant $c>1$, one has
\abbr{RBMG}-s $(W_t^{(1)},\bD_t^{(1)})$, $(W_t^{(2)},\bD_t^{(2)})$, such that  
$W_0^{(i)}=x_i$, $i=1,2$,
$\bD_t^{(1)}=\bB_{g(t)}$, $\bD_t^{(2)}\supseteq \bB_{cg(t)}$.  
\newline
(a). Then, there exists a coupling 
$(W_t^{(1)},W_t^{(2)})$ with non-negative 
$\psi_t:=|W_t^{(2)}|-|W_t^{(1)}|$. 
\newline
(b). Such coupling also exists in case
$\bD_t^{(2)}=\bD_{t}^{(1)}=\bB_{g(t)}$.
\end{lem}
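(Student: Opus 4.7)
The plan is to reduce both parts to a one-dimensional pathwise comparison of the radial processes $R^{(i)}_t := |W^{(i)}_t|$, $i=1,2$. Since $d\ge 3$ keeps each $W^{(i)}$ away from the origin, It\^o's formula applied to $y\mapsto|y|$ in the Skorohod decomposition \eqref{skorohod} should give
\begin{equation*}
dR^{(i)}_t \;=\; d\tilde N^{(i)}_t + \frac{d-1}{2R^{(i)}_t}\,dt - dK^{(i)}_t,
\end{equation*}
where $d\tilde N^{(i)}_t := (R^{(i)}_t)^{-1}\,W^{(i)}_t\cdot dU^{(i)}_t$ is a standard one-dimensional Brownian motion by L\'evy's characterization (its quadratic variation is $dt$), and $dK^{(i)}_t := -(R^{(i)}_t)^{-1}\,(W^{(i)}_t\cdot \mathbf{n}(t,W^{(i)}_t))\,dL^{(i)}_t$ is a signed measure supported on $\{W^{(i)}_t\in\partial\bD^{(i)}_t\}$. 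For $W^{(1)}$ the inward normal on $\partial\bB_{g(t)}$ equals $-y/|y|$, so $dK^{(1)}_t=dL^{(1)}_t\ge 0$; the sign of $dK^{(2)}_t$ will turn out to be immaterial.

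Next I would couple so that $\tilde N^{(1)}=\tilde N^{(2)}=:\tilde N$. In part (b), both \abbr{rbmg}s can be built simultaneously from this common radial driver together with independent Brownian motions $\sigma^{(i)}$ on $S^{d-1}$ via the skew-product representation
\begin{equation*}
W^{(i)}_t \;=\; R^{(i)}_t\,\sigma^{(i)}\!\bigl(H^{(i)}_t\bigr), \qquad H^{(i)}_t := \int_0^t (R^{(i)}_s)^{-2}\,ds,
\end{equation*}
with $R^{(i)}$ the BES$(d)$ process reflected at $g(t)$, driven by $\tilde N$ and started from $|x_i|$, and $\sigma^{(i)}(0)=x_i/|x_i|$; radial symmetry of $\bB_{g(t)}$ makes each $W^{(i)}$ a genuine \abbr{rbmg} in the sense of Definition \ref{rbmg}. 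In part (a), where $W^{(2)}$ is given, I would instead extract $\tilde N$ from $W^{(2)}$ via the formula above, enlarge the probability space to carry an independent spherical Brownian motion $\sigma^{(1)}$ starting at $x_1/|x_1|$, and then build $W^{(1)}$ by the same skew product on $\bB_{g(t)}$.

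The comparison is now one-dimensional. The common-noise coupling makes $\Delta_t:=R^{(2)}_t-R^{(1)}_t$ a finite-variation process with
\begin{equation*}
d\Delta_t \;=\; \tfrac{d-1}{2}\Big(\tfrac{1}{R^{(2)}_t}-\tfrac{1}{R^{(1)}_t}\Big)dt + dK^{(1)}_t - dK^{(2)}_t,
\end{equation*}
so its symmetric local time at zero vanishes. On the event $\{\Delta_t<0\}$ one has $R^{(2)}_t<R^{(1)}_t\le g(t)$, whence $W^{(2)}_t$ lies strictly inside $\bB_{g(t)}\subset\bB_{cg(t)}\subseteq\bD^{(2)}_t$ in part (a), and strictly inside $\bB_{g(t)}=\bD^{(2)}_t$ in part (b). Either way $dL^{(2)}_t=0$ and hence $dK^{(2)}_t=0$; since the drift term is also non-negative on this event and $dK^{(1)}_t\ge 0$, we get $d\Delta_t\ge 0$ on $\{\Delta_t<0\}$. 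Tanaka's formula applied to $(\Delta_t)^-$, together with $(\Delta_0)^-=0$, then gives $(\Delta_t)^-\equiv 0$, i.e.\ $|W^{(1)}_t|\le |W^{(2)}_t|$ for all $t\ge 0$, as claimed.

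The main care point is the skew-product step in part (a): one must verify that the $W^{(1)}$ built from a Bessel process reflected at $g(t)$, whose driver $\tilde N$ was extracted from the supplied $W^{(2)}$, together with an independent spherical Brownian motion, really is a strong Markov \abbr{rbmg} on $\{\bB_{g(t)}\}$ satisfying \eqref{skorohod}. I would first settle this for piecewise-constant $g(\cdot)$, where the reconstruction reduces to concatenating classical skew-product representations of \abbr{rbm} on fixed balls, and then pass to the general case by smooth monotone approximation, exploiting the regularity underlying the definition of \abbr{rbmg}.
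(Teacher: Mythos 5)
Your proposal is correct in outline but takes a genuinely different route from the paper. The paper's proof works directly with the full $d$-dimensional trajectories: it runs the two processes independently until the first time $\psi_t=0$, at which point $|W^{(1)}|=|W^{(2)}|$ and a fixed rotation $\mathbf{O}$ maps one onto the other; it then takes $\widetilde U_t:=\mathbf{O}U_t$ and observes that $\widetilde U$ realizes $W^{(2)}$ (as an undisturbed Brownian motion) up to the first hit of $\partial\bD^{(2)}_t$, while $W^{(1)}$ is the normal reflection of $U$ in $\bB_{g(t)}$ and hence has $|W^{(1)}_t|\le|U_t|=|\widetilde U_t|=|W^{(2)}_t|$ on that interval; at that hitting time $\psi\ge(c-1)g>0$, and a stochastic-domination argument shows the iteration covers all of $[0,\infty)$.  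Your proof instead passes to the radial processes $R^{(i)}=|W^{(i)}|$ via It\^o, arranges a common radial driver $\tilde N$, and runs a Tanaka-type one-dimensional comparison.  The paper's approach avoids any need to realize $W^{(1)}$ via a skew product and never lets the angular parts interact with the comparison; your approach is more in the spirit of classical one-dimensional SDE comparison and, once the skew-product step is in place, yields the monotonicity for all $t$ simultaneously without an intermediate stopping-time decomposition.

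Two points to tighten.  First, the skew-product reconstruction of $W^{(1)}$ on $\bB_{g(t)}$ from a prescribed one-dimensional driver $\tilde N$ and an independent spherical Brownian motion is the load-bearing step and is not established in the paper; you correctly single it out.  It does hold (the inward normal on a sphere is radial, so the reflection contributes only to the radial component, and the angular skew product survives unchanged for time-varying radius), but it is a nontrivial extension of the fixed-radius case and would need to be proven, e.g.\ via the piecewise-constant-then-approximate route you sketch.  Second, in the Tanaka step you argue $d\Delta_t\ge0$ only on $\{\Delta_t<0\}$, but the integrand in Tanaka's formula is supported on $\{\Delta_s\le0\}$, and $L^0(\Delta)=0$ alone does not kill the contribution from $\{\Delta_s=0\}$.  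You need to observe separately that when $\Delta_s=0$ one still has $R^{(2)}_s=R^{(1)}_s\le g(s)<cg(s)$ in part (a) (so $dK^{(2)}_s=0$, drift $=0$, and $d\Delta_s=dK^{(1)}_s\ge0$), respectively that $R^{(1)}$ and $R^{(2)}$ merge by uniqueness of the one-dimensional Skorohod problem in part (b); alternatively, apply It\^o to the $C^1$ function $(x^-)^2$ rather than Tanaka to $x^-$, which sidesteps the boundary of $\{\Delta\le0\}$.  With these two points addressed, the argument is sound.
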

\begin{proof} (a). Given $x, y \in \mathbb{R}^d$ 
with   $|x|=|y|$, 
let  ${\mathcal{V}} := span\{x,y\}$ and ${\bf{O}} (x,y)$ denote the 
unique $d$-dimensional orthogonal matrix acting as the identity 
on ${\mathcal{V}}^{\perp}$   and  as the rotation such that 
${\bf{O}} x =y$ on ${\mathcal{V}}$.
By assumption $\psi_0\ge0$. We run the \abbr{RBMG}-s independently, 
until $\eta_1:=\inf\{t\ge0: \psi_t=0\}$, noting that by continuity of 
$t\mapsto\psi_t$, the function $\psi_t$ is non-negative on $(0,\eta_1)$. 
It thus suffices to consider only $\eta_1<\infty$. 
In this case, let $\{W_t^{(1)}, \, t\geq \eta_1\}$, be the solution of 
(\ref{skorohod}) driven  by Brownian motion 
$\{U_t, \, t \geq \eta_1\}$ starting at 
$W_{\eta_1}^{(1)}=U_{\eta_1}$. Setting
$\widetilde{U}_t :=   {\bf{O}} (  W_{\eta_1}^{(1)},W_{\eta_1}^{(2)}) U_t$ let
\begin{eqnarray*}
\tau_1:=\inf\{t\ge\eta_1:    \widetilde{U}_t \in\partial \bD_t^{(2)}\}.
\end{eqnarray*}
Since $|W_{\eta_1}^{(2)}| =  |W_{\eta_1}^{(1)}| \leq g(\eta_1) < cg(\eta_1)$,  
it follows from the definition of  \abbr{RBMG}  $(W_t^{(2)}, \bD_t^{(2)})$ that  
$\{ \widetilde{U}_t\}$ has for $t \in [\eta_1, \tau_1]$ the same law as 
$\{ W_t^{(2)}\}$.
In particular, a normal reflection at $\partial \bB_{g(t)}$ reduces the norm, 
hence $|W_t^{(1)}| \leq |U_t| = |\widetilde U_t| = |W_t^{(2)}|$ for such $t$.
That is $\psi_t \geq 0$ on $t \in [\eta_1, \tau_1]$. 
With $\psi_{\tau_1} \geq (c-1)g(\tau_1) >0$, clearly  
$\eta_2:=\inf\{t\ge\tau_1: \psi_t=0\} > \tau_1$. In case $\eta_2 < \infty$, 
with  $W_{\eta_2}^{(2)}\in \bD_{\eta_2}^{(1)}$, we repeat the above argument 
for $[\eta_2,\tau_2]$,  then for  $[\eta_3,\tau_3]$, etc. By construction, 
$\eta_n<\tau_n<\eta_{n+1}$ for all $n$. Moreover, a.s. 
$\tau_n \rightarrow\infty$ when $n \to \infty$.
Indeed, assuming without loss of generality that 
$\eta_k<\infty$, we have the stopping times
\begin{align*}
\theta_k &:= \inf\{ t \ge \eta_k : 
\widetilde{U}_t \in \partial \bB_{g(t)} \} \,,\\
\zeta_k &:= \inf\{ t \ge \theta_k : 
\widetilde{U}_t \in \partial \bB_{cg(\theta_k)} \} \,,
\end{align*}
such that $\theta_k < \zeta_k \le \tau_k$ and
conditional on the relevant stopped $\sigma$-algebra 
at $\theta_k$,
the random variable $\zeta_k - \theta_k$ has the law 
of the time it takes an independent Brownian motion 
to get from $\partial \bB_{g(\theta_k)}$ to 
$\partial \bB_{cg(\theta_k)}$. With $g(\theta_k) \ge g(0)$,
by Brownian scaling it follows that the sequence $\{\tau_k-\eta_k\}$ 
stochastically dominates the i.i.d. 
$\{\xi_k\}$, each distributed as  
$\xi:=g(0)^2 \inf\{t\ge0: |U_t|=c,|U_0|=1\}$.
This induces stochastic domination of the corresponding 
partial sums, 
$$
\sum_{k=1}^n (\tau_k-\eta_k) \succeq \sum_{k=1}^n\xi_k \,.
$$
As $n \to \infty$ the right-hand-side 
grows a.s. to infinity and so does the left-hand-side.

\noindent
(b). We follow the construction and reasoning 
of part (a), up to time $\eta_1$, setting now
$\widetilde{U}_t := {\bf{O}} (  W_{\eta_1}^{(1)},W_{\eta_1}^{(2)}) W^{(1)}_t$ 
for all $t \ge \eta_1$. Then, by the 
invariance to rotations of $\bB_{g(t)}$ and 
the fact that only normal reflections are used, 
we have that 
$t \mapsto W^{(2)}_t 1_{\{t < \eta_1\}} + 
\widetilde{U}_t 1_{\{t \ge \eta_1\}}$ is a 
realization of the \abbr{rmbg} 
$(W_t^{(2)},\bB_{g(t)})$, for which $\psi_t$ is 
non-negative.
\end{proof}

\begin{lem}
\label{exptail}
Let $\mathbb{P}_x$ denote the law of the
\abbr{rbm}
$Z_t$ on $\bB_a$, 
starting at $Z_0 = x$. Consider the stopping times
$\tau(a):=\inf\{s\ge0: Z_s\in\partial \bB_{a}\}$ and 
$\sigma(a,r):=\inf\{s\ge0: Z_s\in \bB_{r}\}$. Then,
there exists $C = C_d (\delta) >0$ such that for any $ t, \delta  >0$, 
$\frac{r}{a} \in [\delta, 1)$, $d\geq 3$,
\begin{align}
\label{2.1a}
\sup_{x \in \bB_a}\mathbb{P}_x(\tau(a) > 
t a^2)&<C^{-1}e^{-Ct}\,, 
\\
\label{2.1b}
\sup_{x \in \bB_a \setminus \bB_{r}}\mathbb{P}_x(
\sigma(a,r) > ta (a-r))&<C^{-1}e^{-Ct}\,,
\\
\inf_{x \in \bB_{a/2}} \P_x(\tau(a) > a^2) & > C \,,
\label{2.1c}
\end{align} 
\end{lem}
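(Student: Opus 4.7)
The plan is to reduce all three claims to elementary hitting-time estimates for free Brownian motion, exploiting the observation that up to $\tau(a)$ the reflecting local time $L$ in \eqref{skorohod} is identically zero, so that the \abbr{rbm} $Z$ coincides on $[0,\tau(a)]$ with the driving Brownian motion $U$. Given this, \eqref{2.1a} is immediate: $|U_t|^2-dt$ is a martingale, so optional stopping at $\tau(a)\wedge T$ together with $|U_{\tau(a)\wedge T}|\le a$ yields $\E_x[\tau(a)\wedge T]\le(a^2-|x|^2)/d\le a^2/d$, uniformly in $x\in\bB_a$. Markov's inequality gives $\P_x(\tau(a)>a^2)\le 1/d\le 1/3$ for $d\ge 3$, and iterating the strong Markov property at the times $\{k a^2\}_{k\ge 1}$ produces the exponential tail.

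For \eqref{2.1b}, the main step is to build a Lyapunov function by solving, radially, the mixed boundary value problem
\[
\tfrac12\Delta h=-1 \text{ on }\bB_a\setminus\bB_r,\qquad \pd_{\mathbf n}h\bigr|_{\pd\bB_a}=0,\qquad h\bigr|_{\pd\bB_r}=0.
\]
Setting $u=h'$, one obtains $u'+((d-1)/R)u=-2$ with $u(a)=0$, whose solution is $u(R)=\tfrac{2}{dR^{d-1}}(a^d-R^d)\ge 0$, and hence
\[
h(R)=\tfrac{2a^d}{d(d-2)}(r^{2-d}-R^{2-d})-\tfrac{R^2-r^2}{d}.
\]
A short case split using the hypothesis $r/a\ge\delta$ then shows $\max_{[r,a]}h=h(a)\le C(\delta)\,a(a-r)$: when $r/a$ is bounded away from $1$ one has $h(a)=O_\delta(a^2)$ together with $a^2\le(1-\delta)^{-1}a(a-r)$, while a Taylor expansion around $R=a$ gives $h(a)=O((a-r)^2)\le a(a-r)$. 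Applying It\^o to $h(Z_t)$, the Neumann condition at $\pd\bB_a$ kills the local-time contribution $\int_0^{t}\pd_{\mathbf n}h(Z_s)\,dL_s$ arising from \eqref{skorohod}, so that $h(Z_{t\wedge\sigma(a,r)})+(t\wedge\sigma(a,r))$ is a martingale. Optional stopping yields $\E_x\sigma(a,r)=h(x)\le C(\delta)\,a(a-r)$, and Markov plus iteration of the strong Markov property delivers the asserted exponential tail.

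Finally, \eqref{2.1c} reduces by Brownian scaling to $\inf_{|y|\le 1/2}\P_y(\tau(1)>1)>0$, which is immediate from the inclusion $\{\sup_{s\le 1}|U_s-y|<1/2\}\subseteq\{\tau(1)>1\}$ and the fact that $\P(\sup_{s\le 1}|U_s-y|<1/2)$ equals the positive constant $\P(\sup_{s\le 1}|B_s|<1/2)$ for a standard $d$-dimensional Brownian motion $B$, independently of $y$. The one place demanding care is the It\^o step in \eqref{2.1b}: one must verify that the inward Neumann condition $\pd_{\mathbf n}h\bigr|_{\pd\bB_a}=0$ (together with the radial smoothness of $h$) indeed suffices to annihilate the $dL_s$ term coming from the Skorohod equation. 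Once this is recorded, all three estimates follow from routine Markov-inequality-plus-iteration arguments.
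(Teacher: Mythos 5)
Your proposal is correct but takes a genuinely different route from the paper's argument. The paper reduces each claim, via Brownian scaling and the radial invariance of the law of $|Z|$, to a single $a$-free one-step probability and then iterates the Markov property: for \eqref{2.1a} the one-step bound is $\P_0(|U_s|<1,\ \forall s\le 1)<1$, for \eqref{2.1b} it shows $\sup_{\delta\le\rho<1}\P_{e_1}(|\widehat Z_s|>\rho,\ \forall s\le 1-\rho)$ is bounded away from one using the stochastic domination $|U_s|\succeq|\widehat Z_s|$ borrowed from Lemma~\ref{coupling}(a). You instead estimate the \emph{means} of the relevant hitting times uniformly over starting points---for \eqref{2.1a} via the martingale $|U_t|^2-dt$ (using that $L\equiv 0$ until $\tau(a)$), for \eqref{2.1b} by solving the radial mixed Dirichlet--Neumann problem $\tfrac12\Delta h=-1$ with $h(r)=0$, $h'(a)=0$ and checking that $\max_{[r,a]}h=h(a)\le C(\delta)\,a(a-r)$---after which Markov's inequality and the same iteration give the exponential tails. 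Your route makes the timescale $a(a-r)$ in \eqref{2.1b} transparent from the explicit form of $h$, and it avoids the monotonicity coupling, at the cost of the ODE computation. The It\^o step you flag is handled correctly: on $\partial\bB_a$ the inward normal is $\mathbf n(z)=-z/a$ and $\nabla h(z)=h'(|z|)\,z/|z|$, so the reflection contribution is $\int_0^t\nabla h(Z_s)\!\cdot\!\mathbf n(Z_s)\,dL_s=-h'(a)L_t=0$. One small slip: in verifying $h(a)\le C(\delta)\,a(a-r)$ you write ``$a^2\le(1-\delta)^{-1}a(a-r)$'', which would require $r/a\le\delta$, the opposite of the hypothesis $r/a\ge\delta$. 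The correct case split is: pick a small $\gamma_0>0$; if $1-r/a\ge\gamma_0$ then $h(a)\le C_d\,\delta^{2-d}a^2\le C_d\,\delta^{2-d}\gamma_0^{-1}\,a(a-r)$, while for $1-r/a<\gamma_0$ your Taylor expansion of $h$ near $R=a$ gives $h(a)=O((a-r)^2)\le a(a-r)$. This repairs the exposition without changing the conclusion, so the argument stands.
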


\noindent {\emph{Proof.}}
In case the process starts at $z \in \partial \bB_r$ 
we use $\mathbb{P}_{re_1}$ to indicate probabilities 
of events which are invariant under any  
rotation of the sample path.
Then, with $U_t$ denoting a 
standard Brownian motion, by Brownian scaling 
the left-hand side of (\ref{2.1c}) does not depend on $a$ 
and is merely the 
positive probability $\P_{0.5 e_1}(|U_s|<1, \forall s \le 1)$. 
Further, by the  Markov property, 
invariance to rotations and Brownian scaling, for
$x  \in \bB_a$,
\begin{align*}
&\mathbb{P}_{x}(\tau(a)>ta^2)=\mathbb{P}_{x}(|Z_s|<a, \forall s\le ta^2)
\le  \Big[\sup_{0\le|z|<a}\mathbb{P}_z(|Z_s|<a, \forall s\le a^2)\Big]^{\lfloor{t}\rfloor}\\
&= \big[\mathbb{P}_{0}(|Z_s|<a, \forall s\le a^2)\big]^{\lfloor{t}\rfloor}
= \mathbb{P}_0(|U_s|<1, \forall s\le 1)^{\lfloor{t}\rfloor}  := (1-\eta)^{\lfloor{t}\rfloor},
\end{align*}
with $\eta = \eta_d  >0$, out of which we get (\ref{2.1a}).
Proceeding similarly, we have for (\ref{2.1b}) that
\begin{align*}
\mathbb{P}_{x}(\sigma(a,r)& > ta(a-r))=\mathbb{P}_{x}(|Z_s|>r, \forall s\le ta(a-r))\\
&\le  \big[\sup_{r < |z| \leq a}\mathbb{P}_z(|Z_s|>r, \forall s\le a(a-r))\big]^{\lfloor{t}\rfloor}
= \big[\mathbb{P}_{ae_1}(|Z_s|> r, \forall s\le a(a-r))\big]^{\lfloor{t}\rfloor} \\
&\leq 
\sup_{\delta \leq \rho < 1}\mathbb{P}_{e_1}(|\widehat{Z}_s| > \rho, \forall s\le1-\rho)^{\lfloor{t}\rfloor} 
:= (1-\zeta(\delta))^{\lfloor{t}\rfloor}
\end{align*}
where $\widehat{Z}$ denotes the \abbr{rbm}
on $\bB_1$. 
Further, 
$$
\zeta \ge \inf_{\delta \leq \rho < 1} 
\mathbb{P}_{e_1} (|U_{1-\rho}| \leq \rho) >0 
$$
(by the stochastic domination $|U_s| \succeq 
|\widehat{Z}_s|$, for example,
due to part (a) of Lemma \ref{coupling}).
\qed

\medskip


\begin{lem}
\label{estimates}
Let $\mathbb{P}_x$ denote the law of the \abbr{rbm}
$Z_t$ on $\bB_a$, 
starting at $Z_0 = x$. 
Fixing $\epsilon,\delta \in (0,1/2)$, there exist 
finite $M_d(\epsilon,\delta)$ and  
$C=C_d(\epsilon,\delta)$ such that for all 
$M,T,a$ and $r$ with 
$M \geq M_d (\epsilon,\delta)$,
$T \geq M a^2 \log a$ and $a-M \geq r \geq a \delta$,
\begin{align}
\label{2.3}
\inf_{x \in \bB_r}\mathbb{P}_x(\exists s\le T: |Z_s|<\epsilon)& \ge C^{-1} \big[ \frac{ T}{a^d}\wedge1\big], \\
\label{2.4}
\sup_{x \in \partial \bB_{r}}\mathbb{P}_x(\exists s\le T: |Z_s|<\epsilon)&\le C \big[ \frac{ T}{a^d}\wedge1\big]\,.
\end{align}
\end{lem}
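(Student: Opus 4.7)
The plan is to relate the hitting probability $\P_x(\tau \le T)$ for $\tau := \inf\{s\ge 0 : |Z_s| < \epsilon\}$ to the expected occupation time
\[
\eta_x(T) := \E_x\Big[\int_0^T \mathbb{I}_{\{|Z_s|<\epsilon\}}\, ds\Big] = \int_{\bB_\epsilon} G^a_T(x,y)\, dy,
\]
where $G^a_T(x,y):=\int_0^T p^a_t(x,y)\, dt$ and $p^a_t$ is the Neumann heat kernel on $\bB_a$. The strong Markov property at $\tau$ converts estimates on $\eta_x(T)$ into the sought bounds on $\P_x(\tau\le T)$, while heat-kernel bounds supply $\eta_x(T)\asymp T/a^d$ up to $(\epsilon,\delta)$-dependent constants.

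I would first invoke two standard ingredients: the Aronson-Gaussian upper bound $p^a_t(x,y) \le C_0 t^{-d/2} e^{-|x-y|^2/(C_0 t)}$ for all $t>0$ and $x,y\in\bB_a$ (uniform in $a$ by Brownian scaling), together with the spectral-gap estimate $\lambda_1(\bB_a)\asymp a^{-2}$ and $L^\infty$ bounds on the Neumann eigenfunctions, to conclude that there exist $c_1,c_2>0$ and $M_0=M_0(d)$ such that
\[
\frac{c_1}{a^d}\le p^a_t(x,y)\le \frac{c_2}{a^d},\qquad t\ge M_0 a^2\log a,\ x,y\in\bB_a.
\]
For $x\in\partial\bB_r$ with $r\ge a\delta$ and $y\in\bB_\epsilon$ one has $|x-y|\ge a\delta/2$ (once $a\ge 2\epsilon/\delta$), so the Gaussian bound yields
\[
\int_0^{M_0 a^2\log a}\int_{\bB_\epsilon} p^a_t(x,y)\, dy\, dt \le C(\delta)\epsilon^d a^{2-d},
\]
which is $o(T\epsilon^d/a^d)$ once $T\ge Ma^2\log a$ with $M$ large. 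Combining this with the long-time two-sided bound applied on $[M_0 a^2\log a,T]$,
\[
c_3(\epsilon,\delta)\frac{T}{a^d}\le \eta_x(T)\le C_3(\epsilon,\delta)\frac{T}{a^d}.
\]
An analogous computation for $y\in\partial\bB_\epsilon$, now using the Newtonian Green's function bound $\int_{\bB_\epsilon}|y-z|^{2-d} \,dz\le C\epsilon^2$ to control the short-time contribution, yields
\[
\eta_y(T)\le C_4(\epsilon)\bigl(\epsilon^2+T/a^d\bigr),\qquad y\in\partial\bB_\epsilon.
\]

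For the upper bound \eqref{2.4}, I restrict the occupation integral to the window $[\tau,\tau+\epsilon^2]$ and apply strong Markov:
\[
\eta_x(T+\epsilon^2)\ge \E_x\Big[\E_{Z_\tau}\Big[\int_0^{\epsilon^2}\mathbb{I}_{\{|Z_u|<\epsilon\}}\,du\Big];\tau\le T\Big]\ge \P_x(\tau\le T)\cdot c_5\epsilon^2,
\]
where the uniform bound $c_5\epsilon^2$ on the inner expectation for $y\in\partial\bB_\epsilon$ follows by Brownian scaling and comparison to free Brownian motion (valid once $a\gg\epsilon$, so the outer reflection is negligible on time scale $\epsilon^2$). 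Since $T\ge Ma^2\log a\gg\epsilon^2$, this gives $\P_x(\tau\le T)\le C(\epsilon,\delta) T/a^d$, which when capped at $1$ is \eqref{2.4}. For the lower bound \eqref{2.3}, strong Markov at $\tau$ in the other direction gives
\[
\eta_x(T)\le \P_x(\tau\le T)\cdot\sup_{y\in\partial\bB_\epsilon}\eta_y(T),
\]
so $\P_x(\tau\le T)\ge c_3 (T/a^d) / (C_4\epsilon^2+C_4 T/a^d)$. A two-case split according to whether $T/a^d$ is below or above a constant multiple of $\epsilon^2$ then delivers the claimed $C^{-1}(T/a^d\wedge 1)$ lower bound.

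The main obstacle is establishing the uniform two-sided heat-kernel bound for $t\ge M a^2\log a$; every other step reduces to routine first-moment manipulation. The $\log a$ factor provides comfortable slack for convergence to the uniform density via volume-dependent $L^1\to L^\infty$ ultracontractivity, though the sharper threshold $t\ge Ma^2$ would also suffice if one invokes Sogge/Weyl-type $L^\infty$ bounds on the Neumann eigenfunctions of $\bB_a$.
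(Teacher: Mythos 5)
Your proof is correct in outline but takes a genuinely different route from the paper's. The paper proceeds by an excursion decomposition of the \abbr{rbm} between $\partial\bB_a$ and $\bB_{a/2}$: the probability of hitting $\bB_\ep$ during one excursion is computed in closed form via classical potential theory (formula \eqref{eq:beps}), the number of excursions by time $T$ is shown to concentrate at scale $T/a^2$ via log-moment-generating-function (Chernoff) bounds supplied by the exit-time tails of Lemma \ref{exptail}, and the two are combined. Your route instead translates the hitting probability into an expected occupation time of $\bB_\ep$ via the strong Markov property at $\tau$, and then estimates the occupation functional by Neumann heat-kernel bounds on $\bB_a$. Both are valid; the paper's version is more elementary (it uses only the explicit hitting formula and exit-time exponential tails, no heat-kernel machinery) and has the structural advantage of carrying over almost verbatim to the lattice analogue Lemma \ref{rw_estimates} needed later, simply by replacing \eqref{eq:beps} with the discrete Green-function expansion \eqref{eq:beps2} and Lemma \ref{exptail} with Lemma \ref{rwtail}. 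Your argument is shorter and more conceptual, but imports ultracontractivity/equilibration of the Neumann semigroup on $\bB_a$.

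One point to correct: the Aronson--Gaussian upper bound $p^a_t(x,y)\le C_0\,t^{-d/2}e^{-|x-y|^2/(C_0 t)}$ on a bounded domain does \emph{not} hold for all $t>0$. By Brownian scaling $p^a_t(x,y)=a^{-d}p^1_{t/a^2}(x/a,y/a)$, and since $p^1_s\to|\bB_1|^{-1}$ as $s\to\infty$, the Neumann kernel stabilizes at the order $a^{-d}$ once $t\gtrsim a^2$, whereas $t^{-d/2}\to 0$. So on $[a^2,\,M_0 a^2\log a]$ your claimed inequality underestimates the true kernel and cannot be used as an upper bound. The fix is harmless: apply the Gaussian estimate only on $[0,a^2]$ (giving, for $|x-y|\gtrsim a\delta$, a contribution $\le C(\delta)\,\ep^d a^{2-d}$), and on $[a^2,M_0 a^2\log a]$ use the equilibration bound $p^a_t\le c_2/a^d$ (giving $\le C\,\ep^d a^{2-d}\log a$). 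The short-time contribution then totals $O(\ep^d a^{2-d}\log a)$ rather than $O(\ep^d a^{2-d})$, but since $T\ge Ma^2\log a$ forces the long-time term to be $\ge cM\ep^d a^{2-d}\log a$, the comparison $\eta_x(T)\asymp T\ep^d/a^d$ still follows for $M\ge M_d(\ep,\delta)$ large. The rest of your argument---the two strong-Markov inequalities bracketing $\P_x(\tau\le T)$ between $\eta_x(T+\ep^2)/(c_5\ep^2)$ and $\eta_x(T)/\sup_{y\in\partial\bB_\ep}\eta_y(T)$, together with the $O(\ep^2)$ short-time bound from the Newtonian Green function at a boundary point of $\bB_\ep$---is sound and delivers the two-sided estimate $C^{-1}(T/a^d\wedge 1)\le\P_x(\tau\le T)\le C(T/a^d\wedge 1)$ after the obvious case split.
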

\begin{proof}
Starting at $Z_0=x \in \partial \bB_r$, let 
$\sigma(a,\epsilon) := \inf\{ t \ge 0: |Z_t| \le \epsilon \}$, 
and setting $\sigma_0:=0$,
\begin{align}
\tau_k&:=\inf\{t\ge\sigma_{k-1}: Z_t\in\partial \bB_{a}
\},\quad k\ge1
\label{excursion}\\
\sigma_k&:=\inf\{t\ge\tau_k: Z_t\in \bB_{a/2}\}\nonumber
\end{align}
we call $Z_\cdot$ restricted to $[\sigma_k,\sigma_{k+1}]$ the $k$-th excursion of $Z$, with $L_k:=\sigma_{k+1}-\sigma_k$ denoting its length.
Obviously, for any $k \in \N$
\begin{equation}\label{eq:basic-bd}
\P(\sigma(a,\ep) \le \sigma_k) - 
\P(\sigma_k \ge T) \le \P(\sigma(a,\ep) \le T) \le
\P(\sigma(a,\ep) \le \sigma_k) +
\P(\sigma_k \le T) \,.
\end{equation}
Recall that conditional on their starting and ending
positions, these excursions of the \abbr{rbm} on $\bB_a$
are mutually independent. Consequently,
\begin{equation}\label{eq:hit-bd}
\P(\sigma(a,\ep) > \sigma_k) = \E \Big[ 
\prod_{i=1}^k \big(1-b_{\epsilon}
(Z_{\sigma_{i-1}},Z_{\sigma_i})\big) \Big]
\end{equation}
where $b_\epsilon (x,w) := 
\P_x(\inf_{t \le \tau_1} |Z_t| \le \epsilon \;|\; Z_{\sigma_1}=w)$
is the probability of entering $\bB_\epsilon$ in one 
such excursion. Elementary potential theory 
(e.g. see \cite[Theorem 3.18]{MP}), yields the formula 
\beq\label{eq:beps}
b_\epsilon (x) = 
\frac{|x|^{2-d}-a^{2-d}}{\epsilon^{2-d}-a^{2-d}}
\eeq
for the unconditional probability 
$b_\epsilon (x) := \E [b_\epsilon (x,Z_{\sigma_1})]$.
Hence, applying the strong Markov property of $Z_\cdot$ 
at the stopping time $\sigma_i$ where 
$|Z_{\sigma_i}|=a/2$, going 
from $i=k-1$ backwards to $i=1$ we deduce that
$$
q_k := \P(\sigma(a,\ep) \le \sigma_k) = 
1-(1-b_\epsilon(re_1))(1-b_\epsilon(\frac{a}{2} e_1))^{k-1} \,.
$$
It is easy to check that $b_\epsilon(\frac{a}{2} e_1) 
= c_0 a^{2-d} (1+o(1/M))$ for 
some finite, positive $c_0=c_0(d,\epsilon)$ 
and all $a \ge M \ge 1/\delta$, whereas 
$b_\epsilon(r e_1) \le c' b_\epsilon(\frac{a}{2} e_1)$ 
for some finite $c' = c'(d,\delta)$,
and all $r \ge a \delta \ge 1$.
Thus, setting $k = [T a^{-2} \kappa^{\mp 1}]$ for some 
universal $\kappa$ yet to be determined, we see that 
\beq\label{eq:dom-term}
C^{-1} \Big[\frac{T}{a^d} \wedge 1\Big] \le
q_k \le C \Big[\frac{T}{a^d} \wedge 1\Big] \,,
\eeq
for some finite $C=C(d,\epsilon,\delta,\kappa)$ and all
$M \ge M_d(\epsilon,\delta)$ large enough.
Hence, it suffices to show that for some universal 
$c=c(d,\epsilon,\delta)>0$, $\kappa=\kappa(d,\epsilon,\delta)$ 
finite and all $a \ge 2M_d$, $k \ge 1$, 
$x \in \partial \bB_r$
\begin{equation}\label{eq:exc-concent}
\P_x (k^{-1} a^{-2} \sigma_k \ge \kappa) \le e^{-c k}\,,
\qquad 
\P_x (k^{-1} a^{-2} \sigma_k \le \kappa^{-1}) \le e^{-c k}\,.
\end{equation}
Indeed, our assumption that $T \ge M a^2 \log a$ 
translates to $c k \ge c \kappa^{\mp 1} M \log a$, so 
that for all large enough $M \ge M_0(\kappa,c,d,C)$ 
we have that 
$$
e^{-c k} \le \frac{1}{2 C} \Big[\kappa^{\pm 1} a^{2-d} \wedge 1 \Big] \le \frac{1}{2} q_k \,,
$$
resulting by (\ref{eq:basic-bd}) and 
(\ref{eq:dom-term}) in the claimed bounds. 

\noindent
The universal exponential tail bounds of 
(\ref{eq:exc-concent}), are a direct consequence
of having control on the log-moment generating 
functions 
$\Lambda_k(\theta):= \log \E [e^{\theta \sigma_k}]$
for large $k$ and small $\wh{\theta} := \theta a^2$.
Specifically, by Markov's exponential inequality 
(also known as Chernoff's bound), we get 
(\ref{eq:exc-concent}) as
soon as we show that
\begin{align}\label{eq:upp-tail-mgf}
\kappa_+ & := \liminf_{\wh{\theta} \downarrow 0} \; 
\limsup_{k \to \infty} \; \wh{\theta}^{-1} k^{-1} 
\sup_{a \ge M} \big\{ \Lambda_k(\wh{\theta} a^{-2}) 
\big\} < \infty \,,\\
- \kappa_-^{-1} & := \liminf_{\wh{\theta} \downarrow 0} 
\; \limsup_{k \to \infty} \; \wh{\theta}^{-1} k^{-1} 
\sup_{a \ge M} \big\{
\Lambda_k(-\wh{\theta} a^{-2}) \big\} < 0 \,,
\label{eq:low-tail-mgf}
\end{align}
(provided $\kappa > \kappa_+ \vee \kappa_-$ and 
$c=\wh{\theta} 
(\kappa-\kappa_+) \wedge (\kappa_-^{-1} - \kappa^{-1})$
for $\wh{\theta}>0$ small enough). Turning to control
$\Lambda_k(\cdot)$, recall that $\sigma_k = \sum_{i=0}^{k-1} L_i$, 
with $\{L_i\}$ mutually independent conditional on the values of 
$\{Z_{\sigma_i}\}$. Thus, proceeding in the same manner
as done in (\ref{eq:hit-bd}), we have that for any 
$\theta \in \R$ and $k \in \N$,
$$
\Lambda_k(\theta) = \log \E \Big[ \prod_{i=1}^k m(\theta,Z_{\sigma_{i-1}},Z_{\sigma_i}) \Big] \,,
$$
where $m(\theta,x,w):=\E_x[e^{\theta L_0}|Z_{\sigma_1}=w]$.
By invariance of the joint law of $\{\sigma_k\}$ 
with respect to rotations of the 
\abbr{RBM} sample path $t \mapsto Z_t$, 
the unconditional function 
$m(\theta,x)=\E[m(\theta,x,Z_{\sigma_1})]=m(\theta,|x|)$ depends 
only on $|x|$. Hence, exploiting once more the strong Markov property at the stopping times $\sigma_i$ 
where $|Z_{\sigma_i}|=a/2$
(first for $i=k-1$, then backwards to $i=1$), 
we find that
\beq\label{eq:lbdk}
\Lambda_k(\theta) = (k-1) \log m(\theta,a/2) + 
\log m(\theta,r) \,.
\eeq
Further, $L_0$
is the sum of two independent variables, having the
laws of $\tau(a)$ and $\sigma(a,a/2)$ of Lemma \ref{exptail}. Thus, the universal 
upper bounds (\ref{2.1a}) and (\ref{2.1b})  
imply that for 
any $0 \le \wh{\theta} < C_d(\delta)$
(and $C_d(\delta)>0$ as in Lemma \ref{exptail}),  
\beq\label{eq:bd-mgf-1ex}
\sup_{r \ge a \delta} \, \{ m(\wh{\theta} a^{-2},r)
\} \le \big[ 1+\frac{\wh{\theta}}{C_d(C_d-\wh{\theta})}
\big]^2 \,.
\eeq
Combining this with (\ref{eq:lbdk}) we get 
(\ref{eq:upp-tail-mgf}) (with $\kappa_+=2 C_d^{-2}$ finite).         
Recall that 
for any $Y,y \ge 0$,
$$
\log \E[e^{-Y}] \le -(1-\E[e^{-Y}]) \le -(1-e^{-y})\P(Y \ge y)
\,,
$$
hence
$\log m(-\wh{\theta} a^{-2},r) \le 0$ and 
$$
\log m(-\wh{\theta} a^{-2},a/2) \le - (1-e^{-\wh{\theta}}) 
\P( L_1 \ge a^2) \,.
$$
It thus follows from (\ref{2.1c}) and
the stochastic domination $L_1 \succeq \tau(a)$
starting at some position $x \in \partial \bB_{a/2}$ 
that $\log m(-\wh{\theta} a^{-2},a/2) 
\le -C_d \wh{\theta}/e$ for all $a>0$ and $\wh{\theta} \le 1$,
thereby establishing (\ref{eq:low-tail-mgf})
with $\kappa_-^{-1} = C_d/e$ positive, and completing
the proof of the lemma.
\end{proof}

\bigskip

\noindent {\emph{Proof of Theorem \ref{bmthm}.}}\;
This proof consists of six steps. First, for
$\bD_t=\bB_{f(t)}$ and $f \in \cF_\ast$ of 
\eqref{eq:def-Fsar},
we prove in {\textsc{Step I}} 
the a.s. recurrence of the 
\abbr{rbmg} when 
$J_f=\infty$, and in \textsc{Step II}
its a.s. transience when $J_f<\infty$. 
Relaxing these conditions, in \textsc{Step III} 
we prove part (a), and in \textsc{Step IV}
get part (b) for $\bK = \bB_1$.
The a.s. sample-path recurrence when $J_f=\infty$ is then 
established for $\bK \in \cK$ of \eqref{eq:cK-def}, 
when both $\partial \bK$ and $t \mapsto f(t)$ are $C^3$-smooth 
(see \textsc{Step V}), and further extended to $f(\cdot)$ having 
isolated jump points (see \textsc{Step VI}).

\smallskip
\noindent
{\bf Step I.} For $f \in \cF_\ast$ we set
$\Delta T_l :=t_{l+1}-t_l$ and 
$p_l := a_l^{2-d} \log (1+a_l)$, so that 
$\sum_l p_l < \infty$ and
\begin{equation}
\label{eq:jdisc}
J_f=\sum_{l=1}^\infty a_l^{-d} \Delta T_l \,.
\end{equation}
Considering here $\bD_t=\bB_{f(t)}$ for  
$f \in \cF_\ast$, we proceed to prove the a.s. 
recurrence of the \abbr{rbmg} sample path 
in case $J_f=\infty$. To this end, 
consider the events 
$\Gamma_{l}:=\{\exists t \in [t_{l-1}, t_l) : |W_{t}|<\ep\}$, 
adapted to the filtration ${\cG}_{l}:=\sigma\{W_s, s\le t_{l}\}$. Fixing $\delta \in (0,1/2)$ we set 
$r_l := (a_{l-1}+1) \vee \delta a_l$ and 
further assume that 
\begin{equation}
\label{A1}
\Delta T_l
\geq 2 M_d a_l^2 \log (1+a_l) \,.
\end{equation}
Then, since
\begin{equation}
\label{eq:wt}
W_{t_l} \in \overline{\bB}_{a_{l-1}} \quad {\rm and} \quad \bD_t=\bB_{a_l}, \; \forall t \in  [t_l,t_{l+1}),
\end{equation}
we have by (\ref{2.3}) that 
\begin{eqnarray*}
\zeta_l:=\mathbb{P}(\Gamma_{l+1}|\mathcal{G}_l)=\mathbb{P}(\Gamma_{l+1}|W_{t_l})\ge
\inf_{x\in \overline{\bB}_{a_{l-1}}}\mathbb{P}_{x}(\exists s\le\Delta T_l: |Z_{s}|<\ep)\ge C^{-1} \big[\frac{\Delta T_l}{a_l^d} \wedge 1\big].
\end{eqnarray*}
Recall that $J_f$ of \eqref{eq:jdisc} is infinite, 
hence a.s.
$\{\sum_{l=1}^\infty\zeta_l=\infty \}$, which implies
that $\Gamma_l$ occurs infinitely often 
(by the conditional version of Borel-Cantelli II, see
\cite[Theorem 5.3.2]{Du}). That is, 
%
%
\begin{equation}\label{eq:recc}
\exists l_k\uparrow\infty \quad \& \quad 
s_k\in[t_{l_k},t_{l_k+1}) \quad
\textrm{ such that } \quad
|W_{s_k}|<\epsilon \,.
\end{equation}
By transience of the $d\geq 3$ dimensional Brownian
motion we can set $k_1 = 1$ and recursively pick $u_j:=\inf\{t> s_{k_j}: |W_t|>1/2\}$, 
$k_{j+1} := \inf \{ k: \, s_k > u_j \}$, for $j= 1,2, \dots, $ thus yielding  the event $A_\epsilon$. 
To remove the spurious condition (\ref{A1}) set 
$\psi_l:=\Delta T_l/(a_l^2 \log (1+a_l))$, so  
$\sum_l \psi_l p_l = \sum_l a_l^{-d} \Delta T_l$
diverges by (\ref{eq:jdisc}) whereas $\sum_l p_l$ is finite.
Hence, $\sum_{l}\psi_l p_l  \mathbb{I}_{\{\psi_{l} \geq 2 M_d \}} = \infty $, and the 
preceding argument is applicable even when restricted to  $\{l_k\}\uparrow\infty$ such that 
$\psi_{l_k} \geq 2 M_d$. 

\smallskip
\noindent
{\bf Step II.} Still considering $\bD_t=\bB_{f(t)}$ for 
$f \in \cF_\ast$, we show next that 
$\mathbb{P}(A_{\ep})=0$ whenever $J_f$ of 
(\ref{eq:jdisc}) is finite. To this end, note that
\begin{equation*}
\tau_l:=\inf\{s\ge0:W_s\not\in\overline{\bB}_{r_l}\},
\end{equation*}
for $l =1, 2, \ldots , $ are a.s. finite  and proceed to show that 
\begin{equation}
\label{eq:gsum}
\sum_l \mathbb{P} (\widetilde{\Gamma}_{l}) < \infty ,
\end{equation}
 where
$
\widetilde{\Gamma}_{l}:=\{\exists t \in [\tau_{l}, \tau_{l+1}) : |W_{t}|<\ep\}.
$
Indeed, in this case by Borel-Cantelli I, a.s. the \abbr{rbmg} does not re-enter $\bB_\epsilon$
during $[\tau_l, \infty)$, for some $l $ finite. In any finite time,  even the 
\abbr{rbm}
on $\bB_1$ a.s. makes only finitely many excursions between $\bB_\epsilon$ and $\bB_{1/2}^c$,
hence $\mathbb{P}(A_{\ep})=0$. Turning to prove (\ref{eq:gsum}), 
recall  that  $t_l \leq \tau_l$ and $t_{l+1} \leq \tau_{l+1}$, 
so the interval 
$[\tau_l, \tau_{l+1})$ splits into 
$[\tau_l, \xi_{l+1})$ and $[\xi_{l+1}, \tau_{l+1})$, where $$
\xi_{l+1}:=\inf\{s\ge t_{l+1}: W_s\not\in \overline{\bB}_{r_l}\}\,.
$$ 
Restricted to $t \in [\tau_l, \xi_{l+1})$, the process 
$\{W_t\}$ has the law of a \abbr{rbm}
on $\bB_{a_l}$, and the length of $[\tau_l, \xi_{l+1})$ is at most $\Delta T_l$ plus the length of $[t_{l+1}, \xi_{l+1})$. By (\ref{2.1a}), for some constant $C=C_d(\delta)>0$, any $l$ and all $t$,
\begin{align}
\mathbb{P}(\xi_{l+1}-t_{l+1}>ta_l^2)<C^{-1}e^{-Ct}. \label{length}
\end{align}
Combining (\ref{2.4}) with (\ref{length}) for 
$t=M\log a_l$, $M=M_d\vee \frac{2}{C}$, we have that
\begin{align}\label{eq:first-bd}
\mathbb{P}(\exists s\in[\tau_l, \xi_{l+1}): |W_s|<\ep)\le C[a_l^{-d}\Delta T_l+Mp_l]+C^{-1}a_l^{-2}\,,
\end{align}
with the first term on the right-hand-side  
summable in $l$ iff $J_f<\infty$ (the other 
two terms are summable for any $f \in \cF_\ast$).
Further, restricted to $t \in [\xi_{l+1}, \tau_{l+1})$, 
the process $\{W_t\}$ has the law of 
Brownian motion $\{U_t\}$ (since $r_{l+1}<a_{l+1}$),
hence
\begin{equation}\label{eq:second-bd}
\mathbb{P}(\exists s\in[\xi_{l+1}, \tau_{l+1}): |W_s|<\ep)=\frac{r_l^{2-d} - r_{l+1}^{2-d}}{\epsilon^{2-d}- r_{l+1}^{2-d}} \leq 2\epsilon^{d-2}(r_l^{2-d} - r_{l+1}^{2-d}).
\end{equation}
Bounding $\P(\widetilde{\Gamma}_l)$ by the sum of
the left-hand-sides of 
(\ref{eq:first-bd}) and (\ref{eq:second-bd}), 
we thus conclude that $\sum_l\mathbb{P}(\widetilde{\Gamma}_l)<\infty$ whenever $J_f<\infty$. 

\smallskip
\noindent
{\bf Step III}.
Given non-decreasing, unbounded, positive $t \mapsto f(t)$
(which without loss of generality we assume hereafter 
to be also right-continuous), let $g \in \cF_\ast$ with 
$a_l=2^{l-1} f(0)$ and $t_l:=\inf\{t \ge 0: f(t) \geq 2^{l-1} f(0)\}$. Since $g(t) \leq f(t) \leq 2g(t)$ for all $t \ge 0$, 
we have by part (a) of Lemma \ref{coupling}, the coupling 
$|\widetilde{W}_t| \leq |W'_t|$ for \abbr{RBMG} 
$(\widetilde{W}_t,\bB_{g(t)/2})$ and $(W'_t,\bD_t)$ 
such that $\bD_t \supseteq \bB_{f(t)}$. 
Further, as $J_{4g} \leq J_{f} \leq J_{\frac{1}{2}g}$, 
if $J_f<\infty$ then $J_{\frac{1}{2}g} = 8^d J_{4g} < \infty$
and in view of Step II, a.s. $\{ \widetilde{W}_t \}$ enters 
$\bB_\epsilon$ finitely often. Hence, 
$\mathbb{P}(A_{\epsilon})=0$, yielding the stated a.s. transience 
of the sample path for any such \abbr{rbmg} $(W_t',\bD_t)$,
thereby completing the proof of part (a).  

\smallskip
\noindent
{\bf Step IV.} Returning to $\bD_t = \bB_{f(t)} = f(t) \bB_1$, now for $t \mapsto f(t)$ 
which is further $C^3$-smooth up to isolated jump points, 
we have by yet another application of part (a) of Lemma \ref{coupling} 
that $|W'_t| \leq |W_t^{''}|$ for the \abbr{rbmg} 
$(W_t^{''}, \bB_{4g(t)})$. Assuming that $J_f=\infty$,
or equivalently that $J_{4g} = \infty$ (with $g \in \cF_\ast$
chosen as in Step III), we know from 
Step I that for any $u$ fixed, 
$\{ W_t^{''}, t \ge u \}$ a.s. makes infinitely many excursions from $\bB_\epsilon$ to $\bB_{1/2}^c$. 
With $|W'_t| \le |W_t^{''}|$ we consequently get \eqref{eq:recc} (for any unbounded $t_l$), which 
as we have already seen in Step I of the proof, 
implies that $\{W'_t\}$ a.s. makes infinitely many excursions from $\bB_\epsilon$ to $\bB_{1/2}^c$. 

\smallskip
\noindent
{\bf Step V.} We next extend the a.s. recurrence
of the \abbr{rbmg} $(W_t,\bD_t)$ sample path to $\bD_t = f(t)\bK$ 
with $J_f=\infty$, $\bK$ from $\cK$ of \eqref{eq:cK-def}, such that
$\partial \bK$ and $f(t)$ are both $C^3$-smooth, and 
$\int_0^\infty f'(s)^2 ds < \infty$.
To this end, we 
assume without loss of generality that 
$\bB_1 \subseteq \bK \subseteq \bB_c$ and note that 
$t \mapsto \int_0^t \frac{1}{f(u)} dL_u =:\widetilde{L}_t$ increases 
only when $X_t := \frac{1}{f(t)}W_t$
is at $\partial \bK$. Hence, 
applying Ito's formula to the 
$C^{1,2}$-function  $v(t,x) = \frac{1}{f(t)} x$ (with $v_{xx}=0$), 
and the semi-martingale $\{W_t\}$ of (\ref{skorohod}), we  get that 
$(X,\widetilde{L})$ is the strong  Markov
process solving the deterministic Skorohod problem corresponding for 
$(s,x) \in \mathbb{R}_+ \times \bK$ to
\begin{align}
\label{skorohoda}
X_t &= x + \int_s^t \frac{1}{f(u)} dB_u + \int_s^t  {\bf{n}} (X_u) d \widetilde{L}_u\,,  \\
\label{skorohodb}
\widetilde{L}_t &= \int_s^t \mathbb{I}_{\partial \bK} (X_u) d\widetilde{L}_u  \,, 
\end{align}
where ${\bf{n}} (x)$ denotes the inward unit normal vector
at $x\in \partial \bK$ and
\begin{equation}\label{eq:Bt-def}
B_t = U_t - \int_0^t f'(s) X_s ds, \quad B_0 = 0\,.
\end{equation} 
Further, with ${X_t} \in \bK \subseteq \bB_c$ 
and $\int_0^\infty f'(s)^2 ds < \infty$, the 
quadratic variation 
$\langle M \rangle_t = \int_0^t |f'(s) X_s|^2 ds$ 
of the continuous (local) martingale 
$$
M_t =\int_0^t f'(s) X_s d U_s \,,
$$
has uniformly in $t$ bounded exponential moments. 
That is, for any $\kappa > 1$,
\begin{equation*}\label{exp-mmt-bd}
\mathbb{E}\Big[\exp\big\{\kappa \langle M \rangle_\infty \big\}\Big] 
\le\exp\Big\{c^2 \kappa\int_0^\infty f'(s)^2 ds\Big\}<\infty\,.
\end{equation*}
By Novikov's criterion, 
$Z_t=\exp(M_t - \frac{1}{2} \langle M \rangle_t)$ 
is 
a uniformly integrable continuous martingale 
(see \cite[Proposition VIII.1.15]{RY}). 
The same applies for $Z_t^{-1} = \exp(\widehat{M}_t-\frac{1}{2}\langle \widehat{M} \rangle_t)$ and the martingale $\widehat{M}_t = - \int_0^t f'(s) X_s d B_s$ under the measure $\mathbb{Q}$ such
that $\{B_t, t \in [0,\infty)\}$ is a standard 
Brownian motion in $\mathbb{R}^d$. Hence, by
Girsanov's theorem, restricted to the completion
of the canonical Brownian filtration, the measure
$\mathbb{Q}$ is equivalent 
to $\mathbb{P}$ (see \cite[Proposition VIII.1.1]{RY}).
Moreover, under $\mathbb{Q}$ the process  
$\{X_t\}$ is a normally reflected time 
changed Brownian motion 
(in short \abbr{tcrbm}), on $\bK$
for the deterministic time change
\begin{equation}\label{eq:time-change}
\tau(t):=\int_0^t f(s)^{-2} ds \,.
\end{equation}
Applying the same procedure for the \abbr{rbmg} 
$(W_t',f(t) \bB_c)$, such that $W'_0=W_0$,
yields another 
probability measure $\mathbb{H}$, likewise equivalent 
to $\mathbb{P}$, under which $Y_t := \frac{1}{f(t)} W_t'$
is a \abbr{tcrbm} on $\bB_{c}$ for the {\em same} time 
change $\tau(\cdot)$ as in (\ref{eq:time-change}).
Further, 
$\{W_\cdot \in A\}$ iff 
$\{X_\cdot \in A^{(f)}\}$, and $\{W'_\cdot \in A\}$
iff 
$\{Y_\cdot \in A^{(f)}\}$,
where $A^{(f)}:=\bigcap_{\epsilon>0}A^{(f)}_\epsilon$ and
similarly to Definition \ref{rec} we have that
\begin{align*}
\sigma_\epsilon^{(0,f)}&:=0 \\
\tau_\epsilon^{(i,f)}&:=\inf\{t\ge\sigma_\epsilon^{(i-1,f)}: |x_t|<\epsilon/f(t)\},\;\; i\ge1\\ 
\sigma_\epsilon^{(i,f)}&:=\inf\{t\ge\tau_\epsilon^{(i,f)}: |x_t|>1/(2f(t))\},\\
A_\epsilon^{(f)}&:=\{\tau_\epsilon^{(i,f)}<\infty, \forall i\}
\end{align*}
(as $\epsilon<f(0)$ without loss of generality).
For $J_f=\infty$ we saw in Step IV that $\mathbb{P}(W'_\cdot\in A)=1$, hence
$$
\mathbb{P}(Y_\cdot\in A^{(f)})=1
\quad\Leftrightarrow\quad
\mathbb{H}(Y_\cdot\in A^{(f)})=1
\quad\stackrel{(a)}{\Rightarrow}\quad
\mathbb{Q}(X_\cdot\in A^{(f)})=1
\quad \Leftrightarrow\quad
\mathbb{P}(X_\cdot\in A^{(f)})=1
$$
out of which we deduce that $\mathbb{P}(W_\cdot\in A)=1$
as well. The key implication, marked by (a), is a consequence
of the proof of \cite[Theorem 5.4]{Pa}. This theorem is
a comparison result about  
Neumann heat kernels over domains $\bD_i$, $i=1,2$,
such that $\bD_2\subseteq \bB \subseteq \bD_1\subseteq\mathbb{R}^d$, 
$d\ge2$, for bounded domains $\bD_1$, $\bD_2$ of $C^2$-smooth 
boundary, and some ball $\bB$ centered at $0$, such that
for any $x \in \bD_2$, the line segment 
from $0$ to $x$ is in $\bD_2$. Its proof in 
\cite{Pa} is by constructing a 
(mirror) coupling between the \abbr{rbm} $\widehat{X}$ on
$\bD_2$ and the \abbr{rbm} $\widehat{Y}$ on $\bD_1$, such that
$|\widehat{X}_s| \le |\widehat{Y}_s|$ for all $s \ge 0$
and any common starting point $x \in \bD_2$.
We use it here for $\bD_2=\bK \subseteq \bB_c = \bD_1$ and note
that the monotonicity of the radial component
under this coupling extends to the \abbr{tcrbm}-s
$X_s$ (under $\mathbb{Q}$), and $Y_s$ (under $\mathbb{H}$),
thereby assuring that $Y \in A^{(f)}$, $\mathbb{H}$-a.s.
implies $X \in A^{(f)}$, $\mathbb{Q}$-a.s.

\smallskip
\noindent
{\bf Step VI.} We proceed to show that the conclusion of Step V holds
in case $t \mapsto f(t)$ has jumps $\Delta_j>0$ at 
isolated 
jump points 
$t_1 <\cdots<t_j<\cdots$. 
That is, $f(t)=f_c(t)+f_d(t)$ with a $C^3$-smooth function 
$f_c(\cdot)$ and piecewise constant 
$f_d(t)=\sum_{j} \Delta_j \mathbb{I}_{t \ge t_j}$. 
Setting $t_0=0$ and re-using the notations of Step V, upon applying Ito's formula we get that $X_t$ (and $Y_t$)
solve the corresponding deterministic Skorohod problem 
(\ref{skorohoda})-(\ref{skorohodb}) within each interval $[t_{i-1},t_i)$, 
and $B_t$ is again defined via (\ref{eq:Bt-def}) except 
for $f_c'(t)$ replacing $f'(t)$.
In addition, $X_{t_i}=\eta_i X_{t_i^-}$ and $Y_{t_i}=\eta_i Y_{t_i^-}$
for $i=1,2,\ldots$, where $\eta_i=f(t_i^-)/f(t_i)<1$. Since
$\int_0^\infty f_c'(s)^2 ds$ is finite, as in Step V 
we have measures $\mathbb{Q}$ and 
$\mathbb{H}$, both equivalent to $\mathbb{P}$, under which 
within each interval $[t_{i-1},t_i]$ the processes
$X_t$ and $Y_t$ are \abbr{tcrbm}-s on $\mathbb{K}$ and
$\mathbb{B}_c$, respectively, for the same time change $\tau(\cdot)$. With $J_f=\infty$, we already saw in 
Step IV that $\mathbb{P}(W'_\cdot \in A)=1$. Following
the argument of Step V this would yield that $\mathbb{P}(W_\cdot \in A)=1$, provided we suitably extend the scope 
of the implication (a). That is, suffices to show the 
existence of coupling between \abbr{rbm}-s 
$\widehat{X}$ on $\mathbb{K}$ and $\widehat{Y}$ on $\mathbb{B}_c$,
such that $|\widehat{X}_s| \le |\widehat{Y}_s|$ 
for all $s \ge 0$, in the setting where
at a sequence of isolated times $s_i=\tau(t_i)$ 
one applies the common shrinkage by $\eta_i \in (0,1)$ 
to both $\widehat{X}_{\cdot}$ and
$\widehat{Y}_{\cdot}$. To achieve this, starting at 
$\widehat{Y}_0=\widehat{Y}'_0=\widehat{X}_0=x$,
we produce inductively for $i=0,1,\ldots$ another copy 
$\{\widehat{Y}'_s :  s \in [s_i,s_{i+1})\}$ 
of the \abbr{rbm} on $\bB_c$, with jumps 
from $\widehat{Y}'_{s_i^-}$ 
to $\widehat{Y}'_{s_i}=\widehat{X}_{s_i}$ 
and a coupling such that $|\widehat{X}_s| \le |\widehat{Y}'_s| \le |\widehat{Y}_s|$ for all $s$.
Indeed, as explained in Step V, employing 
\cite[Theorem 5.4]{Pa} separately within each interval $[s_i,s_{i+1})$ yields a (mirror) coupling 
of $\widehat{Y}^{'}$ and $\widehat{X}$ 
that maintains the stated relation 
$|\widehat{X}_{s}| \le |\widehat{Y}'_s|$.
Further, applying part (b) of 
Lemma \ref{coupling} inductively in $i \ge 0$, we
couple $\widehat{Y}'_s$ and
$\widehat{Y}_s$ within each interval $[s_i,s_{i+1})$,
such that $|\widehat{Y}'_s| \le |\widehat{Y}_s|$ 
for all $s \ge 0$, provided
$|\widehat{Y}'_{s_i}| \le |\widehat{Y}_{s_i}|$
for all $i \ge 0$. Starting at $\widehat{Y}_0=\widehat{Y}'_0$, we have the latter
inequality at $i=0$. Then, for $i \ge 1$ we have 
by induction, upon utilizing our coupling on $[s_{i-1},s_{i})$ that 
$|\widehat{X}_{s_i^-}| \le |\widehat{Y}'_{s_i^{-}}|\le|\widehat{Y}_{s_i^{-}}|$. Hence
$|\widehat{Y}'_{s_i}| = |\widehat{X}_{s_i}| \le |\widehat{Y}_{s_i}|$ (after the common shrinkage 
by factor $\eta_i$), as needed for concluding the proof.
\qed


\section{Proof of Theorem \ref{rwthm}}\label{sec3}

Hereafter we denote the inner boundary of a discrete set $\bG$ by $\partial \bG$ and
fix $\bK$ from the collection 
$\cK$ of \eqref{eq:cK-def}, scaled by 
a constant factor so as to have 
$\bK \supseteq \bB_2$ and hence
$(\bB_a\cap\mathbb{Z}^d) \cap\partial(a\bK\cap\mathbb{Z}^d)=\emptyset$
for all $a \ge a_d$ large enough. We then have the 
following \abbr{srw} analog of 
Lemma \ref{exptail}.
\begin{lem}\label{rwtail} 
Let $\P_x$ denote the law of \abbr{srw} 
$\{Z_t, t \ge 0\}$ 
on $a\bK\cap\mathbb{Z}^d$, $d \ge 3$, 
starting at $Z_0=x \in \mathbb{Z}^d$. Considering
the stopping times
$\tau(a):=\inf\{s\ge0: Z_s\in \bB_a^c\}$ and $\sigma(a,r):=\inf\{s\ge0: Z_s\in \overline{\bB}_r\}$,
there exists $C=C_d(\delta)>0$ and $a_d=a_d(\delta)<\infty$ such that for any $t,\delta>0$, $a \ge a_d$, $\frac{r}{a}\in[\delta,1)$, 
\begin{align}
\sup_{x\in \bB_a}\mathbb{P}_{x}(\tau(a)>ta^2)&<C^{-1}e^{-Ct}\,,
\label{2.6a}\\
\sup_{x\in a\bK\backslash \bB_r}\mathbb{P}_{x}(\sigma(a,r)>ta^2)&<C^{-1}e^{-Ct}\,,\label{2.6b}\\
\inf_{x \in \overline{\bB}_{a/2}}\, 
\P_x(\tau(a) > a^2) & > C \,. \label{2.6c}
\end{align}
\end{lem}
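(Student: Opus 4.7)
My plan is to mirror the proof of Lemma \ref{exptail}, establishing each of \eqref{2.6a}--\eqref{2.6c} by first proving a uniform ``one time-unit'' estimate on the scale $a^2$ for all $a \geq a_d$ large enough, and then iterating the strong Markov property at integer multiples of $a^2$ to obtain the exponential tail $(1-\eta)^{\lfloor t \rfloor}$, which yields the stated bounds with $C = C_d(\delta)>0$.

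A key observation simplifying \eqref{2.6a} and \eqref{2.6c} is that the scaling convention $\bK\supseteq \bB_2$ places $\bB_a$ strictly inside $a\bK$ with a buffer of width at least $a$. Consequently, while inside $\bB_a$, the walk $\{Z_t\}$ coincides with an unrestricted \abbr{srw} on $\mathbb{Z}^d$ and never triggers the graph-boundary rule at $\partial(a\bK\cap\mathbb{Z}^d)$. Thus both bounds reduce to classical statements about \abbr{srw} on $\mathbb{Z}^d$: by Donsker's invariance principle, as $a \to \infty$ one has
$\inf_{x\in\overline{\bB}_{a/2}\cap\mathbb{Z}^d}\mathbb{P}^{\mathbb{Z}^d}_x(|Z_s|<a,\forall s\le a^2) \to \inf_{|y|\le 1/2}\mathbb{P}_y(|U_s|<1,\forall s\le 1) = \mathbb{P}_{0.5 e_1}(|U_s|<1,\forall s\le 1) > 0$,
producing \eqref{2.6c} upon fixing $a_d$ large, while the analogous limit yields $\sup_{x\in\bB_a\cap\mathbb{Z}^d}\mathbb{P}^{\mathbb{Z}^d}_x(|Z_s|<a,\forall s\le a^2)\le 1-\eta_d<1$, from which \eqref{2.6a} follows by iteration.

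The main work lies in \eqref{2.6b}, where the walk genuinely explores $a\bK\cap\mathbb{Z}^d$ and interacts with $\partial(a\bK\cap\mathbb{Z}^d)$. The plan is to establish the single-scale lower bound
\[
\inf_{x\in (a\bK\setminus \bB_r)\cap\mathbb{Z}^d}\,\mathbb{P}_x\bigl(\sigma(a,r)\le a^2\bigr)\;\ge\;\eta(\delta,\bK)\;>\;0
\]
for all $a\ge a_d(\delta)$, after which iteration via strong Markov at multiples of $a^2$ produces \eqref{2.6b}. I will derive this bound from the corresponding \abbr{rbm} estimate via the invariance principle of Lemma \ref{invariance}. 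For the \abbr{rbm} $\widehat{Z}$ on the bounded uniform domain $\bK\subseteq \bB_c$, the positive jointly continuous transition density $p_1(y,z)$ together with $\mathrm{vol}(\bB_\delta\cap\bK)>0$ (automatic since $\bB_\delta\subset \bB_2\subseteq \bK$) and compactness of $\bK\setminus \bB_\delta$ gives $\inf_{y\in \bK\setminus \bB_\delta}\mathbb{P}_y(\widehat{Z}_1\in \bB_\delta)\ge 2\eta(\delta,\bK)>0$, and a fortiori for the hitting event $\{\exists s\le 1 : \widehat{Z}_s\in \bB_\delta\}$. Rescaling space by $a$ and time by $a^2$ and applying Lemma \ref{invariance} transfers the bound to \abbr{srw} for all $a\ge a_d(\delta)$ large.

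The most delicate point I expect is ensuring the transfer from \abbr{rbm} to \abbr{srw} via Lemma \ref{invariance} is \emph{uniform in the starting point} $x\in (a\bK\setminus \bB_r)\cap\mathbb{Z}^d$, in particular when $x$ lies near $\partial(a\bK)$ where the reflected trajectories must be controlled in nontrivial boundary geometry. The uniform domain hypothesis on $\bK$ is precisely what supplies the Harnack-type regularity and stability of \abbr{rbm} under lattice approximation that make Lemma \ref{invariance} uniform; the dependence of the constant $\eta(\delta,\bK)$ on $\delta$ is tracked through the compactness argument for the heat kernel above, and yields the stated $C = C_d(\delta)$.
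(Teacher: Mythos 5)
Your plan mirrors the paper's proof essentially exactly: rescale to $Z^a_t = a^{-1}Z_{\lfloor a^2 t\rfloor}$, prove a single-scale (time-$a^2$) estimate uniform over starting points and over $a\ge a_d$, and iterate the strong Markov property to obtain the exponential tail; the uniform single-scale bound is obtained by the compactness/subsequence argument via the invariance principle of Lemma~\ref{invariance} together with the strictly positive jointly continuous Neumann heat kernel on the uniform domain $\bK$, as in the paper. Your observation that \eqref{2.6a} and \eqref{2.6c} reduce, thanks to $\bK\supseteq\bB_2$, to statements about unrestricted \abbr{srw} on $\Z^d$ (so Donsker suffices there, and only \eqref{2.6b} genuinely needs Lemma~\ref{invariance}) is a correct and mildly simplifying remark, but it does not change the structure of the argument; the paper happens to route all three bounds through Lemma~\ref{invariance} for uniformity of treatment.
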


\bigskip
In proving Lemma \ref{rwtail} we rely on 
the following invariance principle in 
bounded uniform domains, which allows us to transform hitting probabilities of \abbr{srw} to the corresponding probabilities for an \abbr{rbm}.
\begin{lem}\label{invariance}\cite{BC,CCK}
Fix a bounded uniform domain 
$\bD\subseteq\mathbb{R}^d$ and 
let $Y_t^n:=n^{-1}Y_{\lfloor n^2t\rfloor}$ 
denote the \abbr{srw} 
on $\bD\cap(n^{-1}\mathbb{Z})^d$, induced by 
the discrete-time \abbr{srw} $\{Y_t\}$
on $n\bD\cap\mathbb{Z}^d$. 
If $Y_0^n=x_n \rightarrow x\in\overline{\bD}$, then 
$\{Y_t^n;t\ge0\}$ converges weakly in $D([0,\infty),\overline{\bD})$ 
to $\{W_{\kappa t};t\ge0\}$, where $W$ is the \abbr{rbm}
on $\overline{\bD}$ starting from $x$, time changed by constant $\kappa$.
\end{lem}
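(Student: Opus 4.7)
The plan is to follow the Dirichlet-form approach of \cite{BC,CCK}, combining tightness of $\{Y^n\}$ with identification of its weak limit via Mosco convergence of the associated Dirichlet forms.

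First, set up the reversible structure. The \abbr{srw} $Y$ on $\bG_n := n\bD\cap\Z^d$ is reversible with respect to its (inner) degree measure. After the diffusive rescaling, this becomes a measure $\mu_n$ on $\overline{\bD}$ that converges vaguely to a constant multiple of Lebesgue measure on $\bD$; the ratio of these two measures is what will determine the constant $\kappa$ in the statement. Let $(\cE^n,\cF^n)$ denote the discrete Dirichlet form of $Y^n_t := n^{-1}Y_{\lfloor n^2 t\rfloor}$ on $L^2(\overline{\bD},\mu_n)$, and let $(\cE,\cF)$ denote the Neumann form $\cE(u,u)=\tfrac{\kappa}{2}\int_\bD |\nabla u|^2\,dx$ on $H^1(\bD)$, whose associated Hunt process on $\overline{\bD}$ is precisely the time-changed \abbr{rbm} $W_{\kappa\cdot}$.

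Second, exploit the uniform-domain hypothesis. Bounded uniform domains admit Jones' extension operator $H^1(\bD)\hookrightarrow H^1(\R^d)$, a volume-doubling property, and a scale-invariant $L^2$-Poincar\'e inequality on $\bD$; these analytic facts transfer, uniformly in $n$, to the approximating graphs $\bG_n$. Standard Nash--Moser arguments then yield a parabolic Harnack inequality and two-sided Gaussian heat-kernel bounds for $Y^n$ that are uniform in $n$. Those bounds in turn give moment estimates of the form $\mathbb{E}^{x_n}\!\left[|Y^n_t-Y^n_s|^{2p}\right]\le C_p|t-s|^p$ for arbitrary $p\ge 1$, from which tightness of $\{Y^n\}$ in $D([0,\infty),\overline{\bD})$ follows by Kolmogorov's criterion (relative compactness of the initial marginals being immediate from $x_n\to x$).

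Third, identify the limit via Mosco convergence $\cE^n\to\cE$. The recovery sequence (M2) is produced by restricting a smooth $u\in C^\infty(\overline{\bD})$ to $\bG_n$; convergence of the discrete energies to $\cE(u,u)$ then reduces to a Riemann-sum computation, with boundary contributions controlled by the uniform-domain geometry. For the $\Gamma$-liminf inequality (M1) one interpolates discrete functions $u_n$ of bounded $\cE^n$-energy to continuum functions $\tilde u_n\in H^1(\bD)$ via the piecewise linear interpolant composed with a mollification, and extracts via Jones' extension and weak compactness in $H^1$ a limit point $u\in H^1(\bD)$ with $\cE(u,u)\le\liminf_n \cE^n(u_n,u_n)$. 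Since the forms are symmetric, Mosco convergence is equivalent to strong $L^2$-convergence of the associated semigroups, which, together with the tightness obtained above, uniquely identifies any subsequential limit of $\{Y^n\}$ as $W_{\kappa\cdot}$, completing the proof.

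The main obstacle is the $\Gamma$-liminf step on a domain with no assumed boundary regularity; the uniform-domain hypothesis is exactly what makes this work, by providing simultaneously Jones' extension operator and the scale-invariant Poincar\'e inequality needed to compare discrete and continuous energies uniformly up to the (possibly rough) boundary of $\bD$.
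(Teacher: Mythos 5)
Your proposal re-derives the quenched invariance principle of \cite{CCK} from scratch (uniform heat-kernel bounds, tightness, identification via Mosco convergence), whereas the paper treats that result as a citation: Lemma \ref{invariance} is obtained by specializing \cite[Theorem 3.17 and Section 4.2]{CCK} to the case of constant unit conductances, and the entire written proof consists of two adaptations. Both of those adaptations are absent from your argument, and they are genuine gaps rather than stylistic differences (on top of the fact that the uniform-in-$n$ Harnack/Gaussian estimates you invoke in one sentence are precisely the hard analytic content of \cite{CCK,GS}, not a routine Nash--Moser exercise).

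First, the starting-point issue. Mosco convergence of the forms is equivalent to strong $L^2$-convergence of the associated semigroups with respect to the reference measure, and this identifies the limiting finite-dimensional distributions only for quasi-every (or Lebesgue-a.e.) starting point, or for initial laws absolutely continuous with respect to that measure. The lemma asserts convergence from an \emph{arbitrary} $x\in\overline{\bD}$ with $x_n\to x$, possibly on the boundary; ``tightness plus $L^2$-semigroup convergence uniquely identifies the limit'' does not deliver this. One needs, on the discrete side, equicontinuity of $x\mapsto P^n_tf(x)$ uniformly in $n$ (your parabolic Harnack inequality could supply it, but you use it only for moment bounds), and, on the continuum side, the fact that the \abbr{rbm} on a bounded uniform domain is well defined from \emph{every} point of $\overline{\bD}$ rather than quasi-every point; the paper handles exactly this by invoking the jointly continuous Aronson-type transition density of \cite[Theorem 3.10]{GS} to eliminate the exceptional set in \cite[Theorem 4.5.4]{FOT}. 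Second, the discrete-time versus continuous-time issue. The Dirichlet-form and Mosco machinery pertains to the continuous-time (constant-speed) walk, while the lemma concerns the discrete-time walk $Y^n_t=n^{-1}Y_{\lfloor n^2t\rfloor}$; you apply semigroup convergence to this discrete-time chain without comment. The paper bridges the gap by writing $Y^n_t=Z^n_{n^{-2}L(n^2t)}$ for an independent Poisson clock $L$, and combining the functional law of large numbers for the clock with the equicontinuity estimate of \cite[Proposition 3.10]{CCK} to get $\sup_{t\le T}|Y^n_t-Z^n_t|\to0$ in probability. Without these two steps your argument, even granting the heat-kernel input, does not prove the stated lemma.
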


\noindent
{\emph{Proof:}} Lemma \ref{invariance} merely adapts
facts from \cite[Theorem 3.17 and Section 4.2]{CCK}
to our context (alternatively, it also follows by strengthening \cite[Theorem 3.6]{BC} as 
suggested in \cite[Remark 3.7]{BC}).
The original result presented in \cite{CCK} 
is for variable-speed and constant-speed random walks (\abbr{vsrw},\abbr{csrw}) on bounded uniform domain with random conductances uniformly bounded up and below. We
are in a special case where all edges in $n\bD \cap \Z^d$ 
are present and have equal non-random conductance. 
Hence, here the \abbr{csrw} is merely a continuous-time \abbr{srw} $Z_t$ of unit jump rate on $n \bD \cap \Z^d$
and further
the invariance principle holds for 
$Z_t^n := n^{-1} Z_{n^2 t}$ and 
{\em{any}} choice of $x\in\overline{\bD}$. 
Indeed, while \abbr{rbm}
$W_t$ constructed via Dirichlet 
forms is typically well defined only for 
a quasi-everywhere starting point 
in $\overline{\bD}$, here this can be refined 
to every starting point. This is because  
in a uniform domain, such \abbr{rbm}
admits a jointly-continuous transition density $p(t,x,y)$ on $\mathbb{R}_{+}\times\overline{\bD}\times\overline{\bD}$ 
of Aronson's type (see \cite[Theorem 3.10]{GS}), 
thereby eliminating the exceptional set in 
\cite[Theorem 4.5.4]{FOT}.
 
It remains only to infer the invariance principle for 
the discrete-time \abbr{srw} $\{Y^n_t\}$ out of the
invariance principle for $\{Z^n_t\}$. To this end,
recall the representation $Y^n_t=Z^n_{n^{-2} L(n^2 t)}$
for $L(t):=\inf\{s\ge0: N(s)=\lfloor t\rfloor\}$ and 
the independent Poisson process $N(t)$ of 
intensity one. Now, fixing $T$ finite, 
by the functional strong 
law of large numbers for Poisson processes,
$$
\sup_{t \in [0,T]} |n^{-2} L(n^2 t) -t| \stackrel{a.s.}{\rightarrow} 0 \,, \quad \textrm{for} \quad n \to \infty \,.
$$
Further, by \cite[Proposition 3.10 and Section 4.2]{CCK}, for any $r>0$,
$$
\lim_{\delta\to0}\limsup_{n\to\infty}\P_{nx_n}\big(\sup_{|s_1-s_2|\le\delta, s_i\le T}|Z_{s_2}^n-Z_{s_1}^n|>r\big)=0.
$$
Hence,
\begin{align*}
\sup_{0\le t\le T} |Y_t^n-Z_t^n|=\sup_{0\le t\le T}|Z_{n^{-2} L(n^2 t)}^n-Z_t^n|\stackrel{p}{\rightarrow} 0 \,.
\end{align*}
and it follows that $(Y_t^n;t\ge0)\stackrel{d}{\rightarrow}(W_{\kappa t};t\ge0)$ as $n\rightarrow\infty$.
\qed

\begin{remark}\label{gen-invariance} 
Lemma \ref{invariance} generalizes to 
$Y_t^{a_n}\stackrel{d}{\rightarrow}W_{\kappa t}$, 
for $Y_t^{a_n}:={a_n}^{-1} Y_{\lfloor {a_n}^2t\rfloor}$ 
that is induced by the discrete-time \abbr{srw} on 
$a_n \bD \cap \Z^d$ and 
any fixed $a_n \uparrow \infty$ (just note that the conditions 
laid out in \cite[first paragraph, Page 13]{CCK} hold 
with $a_n$ replacing $n$).
\end{remark} 

\medskip
\noindent
{\emph{Proof of Lemma \ref{rwtail}.}}\, 
Consider the \abbr{rbm}
$W_{\cdot}$ on $\overline{\bK} \supseteq \bB_2$
and the rescaled discrete time \abbr{srw} 
$Z_t^a:=a^{-1} Z_{\lfloor a^2 t \rfloor}$.
Starting with the proof of \eqref{2.6a}, for 
$a>0$ and $y \in \overline{\bK}$, let
\begin{align*}
q^{\rw} (a,y) &:= \mathbb{P}_y(Z_s^a \in \bB_1, \; \forall s \le 1), 
\qquad
m^{\rw}(a) := \sup_{y \in \bB_1 \cap (a^{-1} \mathbb{Z})^d} \; 
q^{\rw} (a,y) \,,   \\
q^{\bm}(y) &:= \mathbb{P}_y (W_{\kappa s}\in \bB_1, \; \forall s\le1),
\qquad
m^{\bm}:=\sup_{y\in\overline{\bB}_1} \; q^{\bm}(y) \,.
\end{align*}
Then, 
by the Markov property of the \abbr{srw}, for any 
$a,t>0$ and $x \in \bB_a \cap \mathbb{Z}^d$,
\begin{align}
\mathbb{P}_{x}(\tau(a)>ta^2)
&=
\mathbb{P}_x (Z_s\in \bB_{a}, \; \forall s\le ta^2)
=\mathbb{P}_{a^{-1}x}(Z_s^{a}\in \bB_1, \; \forall s\le t)\nonumber\\
&\le
\big[\sup_{y \in \bB_1 \cap(a^{-1}\mathbb{Z})^d}q^{\rw}(a,y)\big]^{\lfloor{t}\rfloor}={m^{\rw}(a)}^{\lfloor{t}\rfloor} \,.
\label{notation}
\end{align}
An \abbr{rbm}
on uniform domain admits 
jointly continuous, positive transition density (\cite[Theorem 3.10]{GS}), and in particular $m^{\bm}=1-2\eta$ for some $\eta \in (0,1/2)$. 
As we show in the sequel, setting $\xi:=\frac{1-\eta}{1-2\eta}>1$, 
\begin{equation}\label{eq:ad-finite}
a_d := \sup \{ a>0 : m^{\rw}(a) > \xi m^{\bm} \} \,,
\end{equation}
is finite. It then follows from 
(\ref{notation}) that for some positive $C$, all $a > a_d$ and
$t>0$, 
\begin{eqnarray*}
\sup_{x \in \bB_a \cap \mathbb{Z}^d} \mathbb{P}_{x}(\tau(a)>ta^2)
\le {m^{\rw}(a)}^{\lfloor{t}\rfloor}
\le(\xi \, m^{\bm})^{\lfloor t\rfloor}=(1-\eta)^{\lfloor t\rfloor}
\le C^{-1} e^{-Ct} \,.
\end{eqnarray*}
To complete the proof of (\ref{2.6a}), suppose to the contrary 
that $a_d=\infty$ in (\ref{eq:ad-finite}), namely 
$m^{\rw}(a_l) > \xi m^{\bm}$ for some $a_l \uparrow \infty$. 
Taking the uniformly bounded 
$y_l \in \bB_1 \cap (a_l^{-1} \mathbb{Z})^d$ such 
that $q^{\rw}(a_l,y_l)=m^{\rw}(a_l)$, we pass to a sub-sequence
$\{l_n\}$ along which $y_{l_n} \to x \in \overline{\bB}_1$.
Then, considering Remark \ref{gen-invariance} for the sequence
$a_{l_n}$, we deduce that as $n \to \infty$,
$$
m^{\rw}(a_{l_n}) = q^{\rw}(a_{l_n},y_{l_n}) \to q^{\bm} (x) \le m^{\bm}
\,,
$$
in contradiction with our assumption 
that $m^{\rw}(a_{l_n}) > \xi m^{\bm}$ for some $\xi>1$ and all $n$. 

Likewise, whenever $x \in \overline{B}_{a/2} \cap \mathbb{Z}^d$ we have
that
\begin{align*}
\mathbb{P}_{x}(\tau(a)>a^2)=
\mathbb{P}_{a^{-1} x}(Z_s^a \in \bB_1, \; \forall s \le 1) 
\ge 
\inf_{y\in \overline{\bB}_{1/2} \cap(a^{-1} \mathbb{Z})^d} \; q^{rw}(a,y)
:= m_{\rw}(a) 
\end{align*}
and by the same reasoning as before, 
$$
\liminf_{a \to \infty} m_{\rw}(a) \ge 
\inf_{z \in \overline{\bB}_{1/2}} \{ q^{\bm}(z) \} > 0 \,,
$$
yielding the bound (\ref{2.6c}). Next, fixing $\delta>0$ 
we turn to the stopping time $\sigma(a,r)$ and set
\begin{align*}
q^{\rw}_\ast (a,y) &:= 
\mathbb{P}_y(Z_s^a \notin \overline{\bB}_\delta, \; \forall s \le 1), 
\qquad
m^{\rw}_\ast (a) := \sup_{y \in (\bK \setminus \overline{\bB}_\delta) \cap (a^{-1} \mathbb{Z})^d} \;
q^{\rw}_\ast (a,y) \,,   \\
q^{\bm}_\ast (y) &:= \mathbb{P}_y (W_{\kappa s} \notin \overline{\bB}_\delta, \; 
\forall s\le 1),
\qquad
m^{\bm}_\ast :=\sup_{y\in\overline{\bK} \setminus \bB_\delta} \; q^{\bm}_\ast (y) \,,
\end{align*}
getting by Markov property of the \abbr{srw} 
that for any $a,t>0$, $r/a \in [\delta,1)$ and 
$x \in (a \bK \setminus \bB_r) \cap \mathbb{Z}^d$ 
\begin{align}
\mathbb{P}_x(\sigma(a,r)>ta^2)&=\mathbb{P}_{x}(Z_s\notin 
\overline{\bB}_r, \;  \forall s\le ta^2)
\leq
\mathbb{P}_{a^{-1} x}(Z_s^a \notin \overline{\bB}_\delta, 
\;\; \forall s\le t)\nonumber\\
&\le
\big[\sup_{y\in(\bK\backslash \overline{\bB}_\delta)\cap(a^{-1}\mathbb{Z})^d}
q^{\rw}_\ast (a,y)\big]^{\lfloor{t}\rfloor}={
m^{\rw}_\ast (a)}^{\lfloor{t}\rfloor}\,. 
\label{notation2}
\end{align}
By the same arguments as in case of (\ref{notation}), 
again $m^{\bm}_\ast = 1 - 2\eta$ for some $\eta \in (0,1/2)$, and in view of
Remark \ref{gen-invariance} the corresponding constant 
$a_d$ as in (\ref{eq:ad-finite}), is finite, with 
(\ref{notation2}) thus yielding (\ref{2.6b}).
\qed

\bigskip
Equipped with Lemma \ref{rwtail} 
we can now establish the following 
\abbr{srw} analog of Lemma \ref{estimates}.
\begin{lem}\label{rw_estimates}
Let $\P_x$ denotes the law of \abbr{srw} $\{Z_t, t \ge 0\}$ 
on $a\bK\cap\mathbb{Z}^d$, starting at $Z_0 = x \in \mathbb{Z}^d$.
\newline
(a). For $\delta \in (0,1/2)$, there exist $C = C_d (\delta)>0$ and $M_d=M_d(\delta)$ finite, such that for all $M \geq M_d$, 
and any $T \geq M a^2 \log a$, $a-M \geq r \geq a \delta$,
\begin{align}\label{lower_bound}
\inf_{x \in r\bK}\mathbb{P}_x(\exists s\le T: |Z_s|=0)& \ge C^{-1} \big[ \frac{ T}{a^d}\wedge1\big],\\
\sup_{x \in \partial (r\bK\cap\mathbb{Z}^d)}\mathbb{P}_x(\exists s\le T: |Z_s|=0)&\le C \big[ \frac{ T}{a^d}\wedge1\big], \label{upper_bound}
\end{align}
\noindent
(b). The uniform bound \eqref{upper_bound} applies for \abbr{srw}
$\{Z_t\}$ on growing domains $\widetilde{\bD}_t \supseteq
\bB_{a+1} \cap \Z^d$, starting at arbitrary 
$Z_0 \in \bB_{a \delta}^c$.
\end{lem}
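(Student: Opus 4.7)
The plan is to transfer the proof of Lemma \ref{estimates} to the discrete \abbr{srw} setting: the excursion decomposition \eqref{excursion} carries over verbatim, now with
\[
\tau_k := \inf\{t \ge \sigma_{k-1}: Z_t \in \partial(a\bK \cap \Z^d)\}, \qquad
\sigma_k := \inf\{t \ge \tau_k : Z_t \in \overline{\bB}_{a/2}\cap \Z^d\},
\]
Lemma \ref{rwtail} substitutes for Lemma \ref{exptail}, and standard discrete potential theory for \abbr{srw} killed on $\partial(a\bK \cap \Z^d)$ replaces the explicit Brownian formula \eqref{eq:beps}. Setting $b(x) := \P_x(\exists t \le \tau_1 : Z_t = 0)$ for the one-excursion probability of hitting the origin, the conditional independence of successive excursions together with the strong Markov property applied at the $\sigma_i$ gives, as in \eqref{eq:hit-bd},
\[
q_k := \P_x(\exists s \le \sigma_k : Z_s = 0) = 1 - \E\Big[\prod_{i=0}^{k-1}\bigl(1 - b(Z_{\sigma_i})\bigr)\Big].
\]

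For part (a), the classical bound $G_a(x,0) \asymp |x|^{2-d} - a^{2-d}$ on the killed Green's function of \abbr{srw} on $a\bK \cap \Z^d$ yields, via optional stopping, both $b(x) \le c' a^{2-d}$ for $|x| \le a - M$ and $c_0 a^{2-d} \le b(x) \le c_0' a^{2-d}$ for $|x| \le a/2$, uniformly in $a \ge a_d(\delta)$ and $x \in r\bK$ with $r \in [a\delta, a-M]$. Combined with the identity above and the choice $k = [T a^{-2} \kappa^{\mp 1}]$, this reproduces the two-sided estimate $C^{-1}[T/a^d \wedge 1] \le q_k \le C[T/a^d \wedge 1]$ of \eqref{eq:dom-term}.

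To pass from $q_k$ to $\P_x(\exists s \le T : Z_s = 0)$, two-sided concentration of $\sigma_k/(ka^2)$ around a universal constant must be established, following the Markov-exponential (Chernoff) scheme of \eqref{eq:upp-tail-mgf}--\eqref{eq:low-tail-mgf}. The $L_i := \sigma_{i+1} - \sigma_i$ are conditionally independent given $\{Z_{\sigma_i}\}$, and the upper tails \eqref{2.6a}--\eqref{2.6b} (in place of \eqref{2.1a}--\eqref{2.1b}) control $\log m(\wh\theta a^{-2}, \cdot)$, while the lower-tail bound \eqref{2.6c} (in place of \eqref{2.1c}) gives $\log m(-\wh\theta a^{-2}, a/2) \le -C_d \wh\theta/e$. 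The hypothesis $T \ge M a^2 \log a$ then becomes $c k \ge c \kappa^{\mp 1} M \log a$, dominating $q_k \asymp a^{2-d}$ once $M \ge M_d$, and delivering \eqref{lower_bound}--\eqref{upper_bound}.

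For part (b), the key observation is that since $\bK \supseteq \bB_2$ one has $\bB_{a+1} \cap \Z^d \subseteq a\bK \cap \Z^d$ for all $a \ge a_d$, and every $x \in \bB_a \cap \Z^d$ has its entire $\Z^d$-neighbourhood in $\bB_{a+1} \subseteq \widetilde{\bD}_t \cap (a\bK \cap \Z^d)$. Thus, until the first exit of $\bB_a$, the \abbr{srw} on $\widetilde{\bD}_t$ and the \abbr{srw} on $a\bK \cap \Z^d$ can be coupled to coincide, so the bound $b(x) \le c_1 a^{2-d}$ is unchanged. Since only the upper bound \eqref{upper_bound} is claimed, it suffices to verify $\P_x(\sigma_k < T) \to 0$ for $k$ of order $T/a^2$, which follows from \eqref{2.6c} applied to the $\bB_a$-portion of each excursion alone --- the (uncontrolled in Lemma \ref{rwtail}) time spent outside $\bB_a$ only lengthens each $L_i$ and so only helps. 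The main technical obstacle I anticipate is justifying the discrete Green's function estimate uniformly across the uniform-domain family $a\bK \cap \Z^d$ with constants depending only on $(d,\delta)$; this is expected from the Gaussian heat-kernel bounds of \cite{GS} for the \abbr{rbm} on $\bK$ combined with the invariance principle of Lemma \ref{invariance}, but requires careful bookkeeping of the discrete-to-continuous comparison.
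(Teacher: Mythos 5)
Your high-level scheme is the right one (transport the excursion argument of Lemma \ref{estimates} to the discrete setting, using Lemma \ref{rwtail} in place of Lemma \ref{exptail} and Chernoff for the concentration of $\sigma_k$), but there is a genuine gap in how you define the outward stopping time. You set $\tau_k := \inf\{t\ge\sigma_{k-1}: Z_t\in\partial(a\bK\cap\Z^d)\}$, whereas the paper takes $\tau_k := \inf\{t\ge\sigma_{k-1}: Z_t\in\bB_a^c\}$ (recall $\bK\supseteq\bB_2$, so $\bB_a\subsetneq a\bK$). This is not cosmetic. With the paper's choice, the one\nobreakdash-excursion hitting probability $b(x)=\P_x(\inf_{t\le\tau_1}|Z_t|=0)$ is computed for the \abbr{srw} killed upon exiting the Euclidean ball $\bB_a$ while it never sees $\partial(a\bK\cap\Z^d)$ at all, so the exact discrete potential theory of \eqref{eq:beps2}, quoted from \cite[Proposition 1.5.9]{La}, applies verbatim; the remaining leg $[\tau_{k+1},\sigma_{k+1})$, which does see the domain boundary, is controlled only through the universal tail bounds of Lemma \ref{rwtail}. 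With your choice, $b(x)$ is the hitting probability of the origin for the walk killed on $\partial(a\bK\cap\Z^d)$, and its Green's function is domain\nobreakdash-dependent: the $|x|^{2-d}-a^{2-d}$ form is \emph{not} correct for a general scaled uniform domain, so the ``classical bound'' you invoke is not available in the form you need, nor with constants uniform in $a$. You correctly flag this as the ``main technical obstacle'' --- but it is not an obstacle the paper faces, precisely because of the $\bB_a^c$ choice.

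The same choice is also what the paper's part (b) leans on and what your version of part (b) quietly contradicts. You observe that up to the first exit of $\bB_a$ the \abbr{srw} on $\widetilde{\bD}_t$ and on $a\bK\cap\Z^d$ can be coupled to agree, and conclude ``so the bound $b(x)\le c_1 a^{2-d}$ is unchanged.'' That is true if $\tau_1$ is the first exit of $\bB_a$; it is false with your $\tau_1$, since your excursion continues past $\partial\bB_a$ into the region where the two walks decouple. The paper's argument splits each excursion into $I_k=[\sigma_k,\tau_{k+1})$ (entirely inside $\bB_a$, hence domain\nobreakdash-independent and where any visit to $0$ must occur) and the return leg $[\tau_{k+1},\sigma_{k+1})$ (where it only needs the stochastic domination $L_k\succeq\tau(a)$ from \eqref{2.6a} to get \eqref{eq:low-tail-mgf}, and never needs an upper tail at all since only the upper bound \eqref{upper_bound} is claimed). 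If you redefine $\tau_k$ as the first hit of $\bB_a^c$, the rest of your argument --- the product representation for $q_k$ up to two\nobreakdash-sided bounds on $b$ at $\partial(\overline{\bB}_{a/2}\cap\Z^d)$ and at $\partial(r\bK\cap\Z^d)$, and the Chernoff control of $\sigma_k/(ka^2)$ via Lemma \ref{rwtail} --- matches the paper's proof.
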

\begin{proof}
(a). We adapt the proof of Lemma \ref{estimates} to 
the current setting of discrete time \abbr{srw} 
$Z_t$ on $a \bK \cap \mathbb{Z}^d$, by taking throughout 
$\epsilon=0$ and re-defining the excursions of length 
$L_k:=\sigma_{k+1}-\sigma_k$, $k \ge 0$, to be 
determined now by the stopping times $\sigma_0=0$ and 
\begin{align*}
\tau_k&:=\inf\{t\ge\sigma_{k-1}: Z_t\in \bB_{a}^c\},\quad k\ge1
\\
\sigma_k&:=\inf\{t\ge\tau_k : Z_t \in \overline{\bB}_{a/2}\}\,.\nonumber
\end{align*}
Since the laws of increments of \abbr{srw} are not invariant to rotations, $x \mapsto m(\theta,x)=\E_x[e^{\theta L_0}]$ 
is not a radial function. However, replacing 
Lemma \ref{exptail} (which we used when
bounding $m(\theta,x)$
in case of Brownian motion), by the {\em universal} bounds of Lemma \ref{rwtail}, yields (\ref{eq:upp-tail-mgf}) and 
(\ref{eq:low-tail-mgf}) for the \abbr{srw} case
considered here. 
Thereby, applying the discrete analogue of (\ref{eq:beps}) 
\begin{align}
b(x):=\P_x(\inf_{t \le \tau_1} |Z_t|=0) = c_d(|x|^{2-d}-a^{2-d})+O(|x|^{1-d}) \,, \label{eq:beps2}
\end{align}
where $0<c_d<\infty$ is a dimensional constant
(see \cite[Proposition 1.5.9]{La}), 
at $x \in \partial (r\bK \cap \Z^d)$ and $x \in \partial (\overline{\bB}_{a/2} \cap \Z^d)$, yields the 
\abbr{srw}  analog of \eqref{eq:dom-term}, out of which  
the stated conclusions follow.
\newline
(b). Let $I_k:=[\sigma_k,\tau_{k+1})$, $k \ge 0$.
Our assumptions that $Z_0 \in \bB^c_{a \delta}$
and $\widetilde{\bD}_t 
\supseteq  \bB_{a+1} \cap \Z^d$ result in 
$\{Z_t, t \in I_k\}$ having for each $k \ge 0$ the same 
conditional law given $Z_{\sigma_k}$, as in part (a). 
Since the event $|Z_t|=0$ can only occur for 
$t \in \cup_k I_k$, the derivation leading to 
the \abbr{srw} analog of \eqref{eq:dom-term} applies
here as well. Further, conditional 
on $Z_{\sigma_k}=x$, each $L_k$, $k \ge 1$, 
stochastically dominates the random variable  
$\tau(a)$ of Lemma \ref{rwtail} starting at 
same point $x$. Consequently  
$$
\mathbb{E}_x[e^{-\widehat{\theta}  L_k/a^2}]\le\mathbb{E}_x[e^{-
\widehat{\theta} \tau(a)/a^2}]\,,
$$
and utilizing the uniform in $x$ and $a$ 
control on the r.h.s. due to \eqref{2.6a},
establishes yet again the analog of \eqref{eq:low-tail-mgf}.
Examining the proof 
of \eqref{2.4} in Lemma \ref{estimates} we see that 
this suffices for re-producing the corresponding 
uniform upper bound \eqref{upper_bound}. 
\end{proof}

\noindent 
{\emph{Proof of Theorem \ref{rwthm}}}.
(a). Fix $f(t)$ such that $J_f<\infty$ 
and consider the \abbr{srw} $\{Y_t\}$ 
on $\bD_t \subseteq \Z^d$, $d \ge 3$
for which Assumption \ref{ass-1} holds. 
Similarly to Step II of the proof of Theorem \ref{bmthm},
for $a_l:=(c+1)^l$, $l \ge 1$, define
\begin{align*}
t_l:=&\inf\{s\ge 1: \bD_s\cap \bB_{a_{l+1}}^c\neq\emptyset\},\\
\tau_l:=&\inf\{s\ge0: Y_s\in \bB_{a_l}^c\},\\
\widetilde{\Gamma}_l:=&\{\exists t\in[\tau_{l},\tau_{l+1}): Y_t=0\}\,.
\end{align*}
With $f(\cdot)$ unbounded, for any $l$ eventually 
$\bD_s \supseteq \bB_{a_l} \cap \Z^d$ and 
by the transience of the \abbr{srw} on $\Z^d$, 
necessarily $\tau_l$ are a.s. finite. 
Thus, by Borel-Cantelli I, 
\begin{equation}\label{eq:summable}
\sum_l\mathbb{P}(\widetilde{\Gamma}_l)<\infty \quad \Rightarrow \quad
\P_0(Y_t=0\;\; f.o.\,)=1 \,.
\end{equation}
Turning to bound $\P(\widetilde{\Gamma}_l)$, note 
that $\tau_{l+1} \ge t_l$ and  
$\bD_{t_l} \supseteq \bB_{1+a_{l}}\cap\Z^d$
(by Assumption \ref{ass-1} and the choice of $a_l$).
Hence, by (\ref{2.6a}), we have that for some constants 
$C_d>0$ and $l_d<\infty$, all $l \ge l_d$ and $t \ge 0$,
\begin{align}
\mathbb{P}(\tau_{l+1}-t_l>t a_l^2)<C_d^{-1}e^{-C_d t}. \label{length_control}
\end{align}
Let $\Delta T_l := (t_l-t_{l-1})$ and for 
$\delta=1/(c+1)<1/2$ and $M_d=M_d(\delta)$ 
of Lemma \ref{rw_estimates}, set $T_l^\ast := M_d \log a_l$ and $T_l = \Delta T_l + T_l^\ast a_l^2$. Since
$\tau_l \ge t_{l-1}$ the length of 
$[\tau_{l}, \tau_{l+1})$ is at 
most $\Delta T_l$ plus the length of 
$[t_{l}, \tau_{l+1})$, which by (\ref{length_control})
is with high probability under $T_l^\ast a_l^2$.
Further, $\bD_{\tau_l} \supseteq \bD_{t_{l-1}} \supseteq \bB_{1+a_{l-1}} \cap \Z^d$ and $Y_{\tau_l} \in \B_{a_l}^c$,
hence from part (b) of Lemma \ref{rw_estimates} we have that,
\begin{align}
\P(\widetilde{\Gamma}_l) &\le 
\P(\tau_{l+1} - t_l > T_l^\ast a_l^2) +
\P (\exists s\in[\tau_l,\tau_l+T_l]: Y_s=0) \nonumber \\
&\le C_d^{-1}e^{-C _dT_l^\ast} + C a_{l-1}^{-d} T_l \,. 
\label{2.23}
\end{align}
With our choice of $a_l$ growing exponentially in $l$, the 
terms $e^{-C_d T_l^\ast}$ and $a_l^{2-d} T_l^\ast$ 
in the bound (\ref{2.23}) are summable over $l \in \N$.
Hence, the left-hand-side of \eqref{eq:summable} is finite 
whenever $\sum_l a_{l-1}^{-d} \Delta T_l$ is finite. 
Further, Assumption \ref{ass-1} and our definition of 
$t_l$ imply that $f(t_l-1) \le 1+a_{l+1}$. Thus, 
$$
J_f \ge \sum_{l \ge 2} f(t_l-1)^{-d} \Delta T_l 
\ge (1+c)^{-3d} \sum_{l \ge 2} a_{l-1}^{-d} \Delta T_l \,. 
$$ 
Consequently, finite $J_f$ results in 
$\P_0(Y_t=0\;\;$ f.o.$)=1$, which by 
Proposition \ref{equiv} extends to $\P_0(Y_t=y\;\;$ f.o.$)=1$
for all $y \in \Z^d$, as claimed.

\smallskip
\noindent 
(b). Fix $f \in \cF_\ast$ such that $J_f=\infty$ 
and $\bK \in \cK$. Since $J_{f/r}=\infty$ for 
any $r>0$ and $\bD_t = (f(t)/r) (r \bK) \cap \Z^d$,  
taking $r$ large enough we have with no loss of 
generality that $\bK \supseteq \bB_2$. Then, considering 
the \abbr{srw} on $\bD_t$, 
upon replacing (\ref{2.3}) by
(\ref{lower_bound}), the argument we have used in
Step I of the proof of Theorem \ref{bmthm} 
applies here 
as well, apart from the obvious notational changes 
(of replacing $\bB_{a_l}$ and $\overline{\bB}_{a_{l-1}}$ in 
(\ref{eq:wt}) by $a_l \bK \cap \mathbb{Z}^d$ 
and the collection of all $x \in \mathbb{Z}^d$ within 
distance one of $a_{l-1} \bK$, respectively).
\qed

\noindent
\section{On recurrence probability independence of target states}\label{appen}

The following, $xy$-recurrence property, 
generalizes Definition \ref{rec} to arbitrary
starting and target locations, 
$x,y \in \mathbb{R}^d$, respectively. 
\begin{defn}\label{xy_rec}
Suppose $\bD_t\uparrow\mathbb{R}^d$, $x\in \bD_0$, $y\in\mathbb{R}^d$. The sample path $x_t$ of a stochastic process $t\mapsto x_t \in \bD_t$ is $xy$-recurrent if 
$x_0=x$ and the event $A(y):=\cap_{\epsilon>0}A_\epsilon(y)$ occurs, where 
\begin{eqnarray*}
\sigma_\epsilon^{(0)} &:=&\inf\{t\ge0: \bD_t\supseteq \bB_\epsilon+x\},\;\;\\
\tau_\epsilon^{(i)}&:=&\inf\{t\ge\sigma_\epsilon^{(i-1)}: |x_t-y|<\epsilon\},\;\; i\ge1\\ \sigma_\epsilon^{(i)}&:=&\inf\{t\ge\tau_\epsilon^{(i)}: |x_t-y|>1/2\},\\
A_\epsilon(y)&:=&\{\tau_\epsilon^{(i)}<\infty, \forall i\}.
\end{eqnarray*}\end{defn}

\begin{ppn}\label{equiv}
Suppose $\{X_t\}$ is a \abbr{srw} 
on $\bD_t\uparrow \mathbb{Z}^d$,
or alternatively that $(X_t,\bD_t)$ 
is the \abbr{RBMG} of Definition \ref{rbmg}
with $\bD_0$ open connected set and
$\bD_t \uparrow \R^d$. 
Then, the probability $q_{xy}$ of $xy$-recurrence 
does not depend on $y$.
In case of \abbr{rbmg}, if $q_{zy} \in \{0,1\}$ 
for some $z \in \bD_0$ then $q_{xy}=q_{zy}$ 
for all $x \in \bD_0$, whereas in case of \abbr{srw}, if
$q_{zy}=0$ for some $z \in \bD_0$ then 
$q_{xy}=0$ whenever $\|x-z\|_1$ is even. 
\end{ppn}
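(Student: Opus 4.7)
The proposition splits into two separate claims to be handled by rather different arguments: (i) $y$-independence of $q_{xy}$ for fixed $x$, and (ii) $x$-independence (under the $\{0,1\}$ hypothesis), with a parity restriction in the \abbr{srw} case.

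For (i), I would essentially reuse the conditional Borel--Cantelli argument of Step~I in the proof of Theorem~\ref{bmthm}. Fixing $x \in \bD_0$ and two targets $y, y'$, by symmetry it suffices to show $\P_x(A(y) \setminus A(y')) = 0$. On $A(y)$ the process visits $\bB_\epsilon(y)$ at a sequence of stopping times $\tau_n \uparrow \infty$. For $n$ large enough, $\bD_{\tau_n}$ will contain a fixed-length connection (a discrete path for \abbr{srw}, a continuous route for \abbr{rbmg}) from $y$ to $y'$, so the strong Markov property provides a conditional probability bounded below by a constant $c = c(\epsilon, \epsilon', y, y') > 0$ of visiting $\bB_{\epsilon'}(y')$ within a fixed window after $\tau_n$. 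Conditional Borel--Cantelli~II \cite[Theorem~5.3.2]{Du} then forces a.s.\ infinitely many such visits on $A(y)$; the a.s.\ subsequent excursion from $\bB_{1/2}(y')$ (automatic for \abbr{srw}, and following from local transience of the driving Brownian motion for \abbr{rbmg}) upgrades this to $A(y')$.

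For (ii) my plan is to combine the Markov property at a fixed positive time~$T$ with the absolute continuity of the law of $X_T$. Concretely, fix $T > 0$ and define $\psi_T(w) := \P_{w,T}(A(y))$, the probability of $A(y)$ for the process freshly started at $w$ at time~$T$ on the shifted domain sequence $\{\bD_{T+s}\}_{s \ge 0}$. The Markov property at time~$T$ gives $\P_z(A(y) \mid \cF_T) = \psi_T(X_T)$ a.s., so $q_{zy} = \E_z[\psi_T(X_T)]$. Under the hypothesis $q_{zy} \in \{0,1\}$, this forces $\psi_T(X_T) = q_{zy}$ $\P_z$-a.s., so the level set $S_T := \psi_T^{-1}(\{q_{zy}\})$ has full $\P_z(X_T \in \cdot)$-measure. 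In the \abbr{rbmg} case, the \abbr{rbm} transition density on the uniform domain $\bD_T$ is strictly positive and jointly continuous (the Aronson-type bound \cite[Theorem~3.10]{GS} already used for Lemma~\ref{invariance}), so $\P_x(X_T \in \cdot)$ and $\P_z(X_T \in \cdot)$ are mutually absolutely continuous on $\bD_T$; hence $\P_x(X_T \in S_T) = 1$ and $q_{xy} = q_{zy}$. In the \abbr{srw} case with $q_{zy} = 0$, the same scheme works once one verifies that the supports of $\P_x(X_T \in \cdot)$ and $\P_z(X_T \in \cdot)$ inside $\bD_T$ coincide: for $T$ sufficiently large these supports are the $T$-parity classes of $x$ and $z$ inside $\bD_T$, which agree iff $\|x - z\|_1$ is even. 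In that case $\P_z(X_T \in S_T^c) = 0$ transfers to $\P_x(X_T \in S_T^c) = 0$, yielding $q_{xy} = 0$.

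The main obstacle I expect will be the clean formalisation of the Markov-property step --- verifying that $A(y)$ is genuinely a tail-type event representable via $\psi_T(X_T)$, and that $\psi_T$ is measurable --- in the time-inhomogeneous setting induced by $\{\bD_{T+s}\}$. The parity restriction in the \abbr{srw} case is intrinsic rather than technical: for $\|x - z\|_1$ odd, $X_T$ under $\P_x$ and under $\P_z$ have disjoint supports at every fixed $T$, so no comparison via absolute continuity of $X_T$ is possible; and only the $q_{zy} = 0$ direction is asserted because this is precisely what is needed to complete part~(a) of Theorem~\ref{rwthm}.
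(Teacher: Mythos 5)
Your part (i) works but takes a different route: where you run conditional Borel--Cantelli along the successful hitting times $\tau_n$, the paper instead forms the events $\Gamma_{s,t,z,w}=\{X_s=z,\ X_u=w\text{ some }u>t\}$, observes $\P_x(\Gamma_{s,t,z,w}\mid\cF_t^X)\ge\eta\,\mathbb{I}_{\{X_s=z,X_t=y\}}$, and lets $t\to\infty$ via L\'evy's upward theorem to get $\Gamma_{s,z,y}\subseteq\Gamma_{s,z,w}$ a.s.\ (and back, by symmetry); the same device handles the $\ep$-ball events $A_\ep(y)$ in the \abbr{rbmg} case. Both approaches rest on the same one-step comparison and give the stronger statement $A(y)=A(y')$ a.s.; L\'evy's theorem simply saves the bookkeeping of thinning to non-overlapping observation windows that conditional Borel--Cantelli at random times would require of you.

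Part (ii) has two genuine gaps. In the \abbr{rbmg} case you invoke \cite[Theorem 3.10]{GS}, which gives a strictly positive, jointly continuous Neumann heat kernel for \abbr{rbm} on a \emph{fixed} uniform domain; but the law of $X_T$ under $\P_x$ is that of the time-inhomogeneous \abbr{rbmg} run through $\{\bD_t\}_{t\le T}$, for which that bound is not available, and Proposition \ref{equiv} assumes no time- or boundary-regularity beyond well-posedness of the Skorohod problem. The paper avoids this entirely by conditioning not at a fixed time but at the exit time $\tau_\alpha$ of a small ball $\tfrac{x+z}{2}+\bB_\alpha\subseteq\bD_0$: before that exit the process is a free Brownian motion, whose joint exit position-and-time law has a strictly positive continuous density by \cite[Theorems 1 and 3]{Hs}, so $\mathbb{P}^{x,\alpha}$ and $\mathbb{P}^{z,\alpha}$ are mutually equivalent; one then chains along a finite sequence of such balls connecting $z$ to $x$ within $\bD_0$. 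In the \abbr{srw} case, the claim that $\mathrm{supp}(\P_x(X_T\in\cdot))$ and $\mathrm{supp}(\P_z(X_T\in\cdot))$ coincide for large $T$ is neither established nor needed: the early, small domains may constrain reachability from $x$ and from $z$ asymmetrically. The paper's Step II uses just one reachable point: with $s_0$ the first (necessarily even) time with $\P_z(X_{s_0}=x)>0$, one has $\P_z(X_{2s+s_0}=x)>0$ for all $s\ge0$, so $q_{zx}=0$ forces $q_{xx}(2s+s_0)=0$ for every $s$; since under $\P_x$ visits to $x$ occur only at even times, a.s.\ each visit at time $\ge s_0$ yields only finitely many further visits, whence $q_{xx}=0$.
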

\begin{remark} Adapting the approach we use for the 
\abbr{rbmg}, it is not hard to show that for 
{\em{continuous time}} \abbr{srw} (on growing domains
$\bD_t \uparrow \Z^d$), having $q_{zz} \in \{0,1\}$ 
for some $z \in \bD_0$ results in $q_{xy}=q_{zz}$ 
for all $x \in \bD_0$ and $y \in \Z^d$. This approach is based
on the equivalence of hitting measures of suitable sets
when starting the process at nearby initial states.
This however does not apply for discrete time \abbr{srw},
hence our limited conclusion in that case.
\end{remark}
\begin{proof} This proof consists of the following 
four steps. Starting with the \abbr{SRW} we 
show in \textsc{Step I}
that 
$q_{xy}$ 
does not depend on $y$, then for 
$x,z\in \bD_0$ with $\|x-z\|_1$ even, we prove 
in \textsc{Step II}
that $q_{zz}=0$ implies $q_{xx}=0$.
In case of the \abbr{RBMG} we have that  
$q^\ep_{xy} := \P_x(A_\ep(y)) \downarrow q_{xy}$ 
and deduce the stated claims upon 
showing in \textsc{Step III} that if $q_{zz} \in \{0,1\}$
then $q_{xz}=q_{zz}$ for any $x,z \in \bD_0$, then conclude 
in \textsc{Step IV} that $q^\ep_{xy}=q^\ep_{xx}$ 
for any fixed $\ep>0$ and all $y \in \R^d$ (even when $0<q_{xx}<1$). 

\smallskip
\noindent
{\bf Step I}.
For the
\abbr{SRW} $X_t$ on $\bD_t \subseteq \mathbb{Z}^d$
and fixed $s \in \N$ we denote by $\P_x^s(\cdot)$ 
the law of \abbr{srw} $X_t$ on the shifted-domains 
$\bD_{t+s}$ starting at $X_0=x$.
Then, for any $x,y \in \mathbb{Z}^d$ and $s \ge 0$,
$$
q_{xy}(s):=\P(X_t=y \; i.o. \; | \; X_s=x)=\P^s_x(X_t=y \; i.o.)\,,
$$ 
with $q_{xy}:=q_{xy}(0)$. Since $\bD_t \uparrow \mathbb{Z}^d$,
clearly any $y,w \in \mathbb{Z}^d$ are also in 
$\bD_t$ provided $t \ge t_0(y,w)$ is large enough, 
with some non-self-intersecting path in
$\bD_{t_0}$ connecting $y$ and $w$.
Setting $\mathcal{F}_t^X:=\sigma\{X_s,s\le t\}\uparrow\mathcal{F}_\infty$ and events
$\Gamma_{s,t,z,w}:=\{X_s=z, X_u=w$ some $u>t\}$, 
we thus have $\eta=\eta(y,w)>0$ such that for any 
starting point $x$, all $z,s$ and $t \ge t_0 \vee s$,
$$
\P_x(\Gamma_{s,t,z,w}|{\mathcal F}^X_t)
\ge\eta\mathbb{I}_{\{X_s=z, X_t=y\}} \,.
$$
Further as $t \to \infty$ we have that 
$$
\Gamma_{s,t,z,w} \downarrow \Gamma_{s,z,w} :=
\{X_s=z \textrm{ and } X_u=w \; i.o.  \textrm{ in } u\} \,.
$$ 
Clearly, $\Gamma_{s,z,w} \in \cF_\infty$ so it follows by
L\'{e}vy's upward theorem (and dominated convergence,
see \cite[Theorem 5.5.9]{Du}), 
that for any $x$, a.s. 
\begin{eqnarray*}
\mathbb{I}_{\Gamma_{s,z,w}}=\P_x(\Gamma_{s,z,w}|\mathcal{F}_\infty)= \lim_{t \to \infty} \P_x(\Gamma_{s,t,z,w}|\mathcal{F}_t^X)  
\ge\eta\limsup_{t\rightarrow\infty}
\mathbb{I}_{\{X_s=z,X_t=y\}}=
\eta\mathbb{I}_{\Gamma_{s,z,y}}\,.
\end{eqnarray*}
The same applies with the roles of $y$ and $w$ 
exchanged and consequently, 
a.s. $\Gamma_{s,z,y}=\Gamma_{s,z,w}$ 
for all $z,y,w \in \mathbb{Z}^d$ and $s \ge 0$. 
In particular, $q_{zy}(s) = \P(\Gamma_{s,z,y}| X_s=z)$
is thus independent of $y$, for any $z$ and $s \ge 0$. 

\smallskip
\noindent
{\bf Step II}. 
Assuming now that $q_{zz}=q_{zz}(0)=0$ for some $z \in \bD_0$, 
we have from Step I that $q_{zx}=0$. As explained before (in Step I), 
$s_0 := \inf\{t : \P_z(X_{t}=x)>0\}$ is a finite integer
and clearly $\P_z(X_{2s+s_0}=x)>0$ for any $s \ge 0$. By
the Markov property at time $2s+s_0$, 
$$
0 = q_{zx} \ge \P_z(X_{2s+s_0}=x, X_t=x \;\; i.o.) 
= \P_z(X_{2s+s_0}=x) q_{xx}(2s+s_0) \,.
$$
Consequently, for any $s \ge 0$,
$$
\P_x(X_{2s+s_0}=x, X_t=x\;\; i.o.) = \P_x(X_{2s+s_0}=x) q_{xx}(2s+s_0) = 0 \,.
$$
Starting at $X_0=x$, the event $\{X_t=x\}$ is possible only at $t$ even. 
Since $\|x-z\|_1$ is even, so is the value of $s_0$ and from the 
preceding we know that $\P_x$-a.s. any visit of $x$ at even integer larger than 
$s_0$ results in only finitely many visits to $x$. Since there can be 
only finitely many visits of $x$ up to time $s_0$, we conclude that 
$q_{xx}=0$.


\smallskip
\noindent
{\bf Step III.}
Dealing hereafter with the \abbr{RBMG}, recall that 
$A_\ep(y) \downarrow A(y)$ for
$A_\ep(y)=\{\exists s_k, u_k\uparrow\infty: 
|X_{s_k}-y|<\epsilon, |X_{u_k}-y|>1/2, u_k\in(s_k,s_{k+1})\}$.
Let $\P_x^s(\cdot)$ stand for the law of the 
\abbr{RBMG} $\{X_t\}$ on shifted-domains $\bD_{t+s}$ 
starting at $X_0=x$, and  
$q^\ep_{xy}(s):=\P^s_x(A_\ep(y))$ with $q^\ep_{xy}=q^\ep_{xy}(0)$,
so that $q^\ep_{xy} \downarrow q^0_{xy} = q_{xy}$ when $\ep \downarrow 0$.  
We first prove that if $q_{zz} \in \{0,1\}$ for some $z \in \bD_0$
then $q_{xz}=q_{zz}$ for any $x\in\bD_0$ such that 
$\frac{x+z}{2}+\mathbb{B}_\alpha\subseteq\mathbb{D}_0$
for some $\alpha > |x-z|/2$. Indeed, with
$\mathbb{P}^{x,\alpha}$ denoting the joint law of 
$(X_{\tau_\alpha},\tau_\alpha)$ for the first exit time
$\tau_\alpha:=\inf\{s\ge0: X_s\notin \frac{x+z}{2}+\mathbb{B}_\alpha\}$
and $X_0=x$, we have that
\begin{align}\label{eq:xz-iden}
q^\ep_{xz} = \int q^\ep_{x'z}(\gamma)d\mathbb{P}^{x,\alpha} (x',\gamma) \,,
\end{align}
for any fixed $\ep >0$. By dominated convergence this 
identity extends to $\ep=0$ and considering 
it for $x=z$ (and $\ep=0)$, we deduce that
$q_{x'z}(\gamma)=q_{zz} \in \{0,1\}$ 
for $\mathbb{P}^{z,\alpha}$-a.e. $(x',\gamma)$. By our assumption 
about the points $x$ and $z$, the measure 
$\mathbb{P}^{z,\alpha}$ is merely the joint law of 
exit position and time for $\frac{x+z}{2}+ \bB_\alpha$ 
and Brownian motion $X_s$ starting at $z$ and as such
it  has a continuous Radon-Nikodym 
density with respect to the product of the uniform surface measure 
$\omega_{d-1}$ on $\partial(\frac{x+z}{2}+\mathbb{B}_\alpha)$
and the Lebesgue measure on $(0,\infty)$ (for example, see 
\cite[Theorem 1 and 3]{Hs}). Further, the latter
density is strictly positive due to the continuity of 
(killed) Brownian transition kernel. Since the same applies 
to the corresponding Radon-Nikodym density 
between $\mathbb{P}^{x,\alpha}$ and $d \omega_{d-1} \times dt$,
we conclude that $\mathbb{P}^{z,\alpha}$ 
and $\mathbb{P}^{x,\alpha}$ are mutually equivalent measures. 
In particular, $q_{x'z}(\gamma)=q_{zz}$ also 
for $\mathbb{P}^{x,\alpha}$-a.e. $(x',\gamma)$ and hence
it follows from (\ref{eq:xz-iden}) at $\ep=0$, 
that $q_{xz}= q_{zz}$.
Now, since $\mathbb{D}_0$ is an open connected subset of $\mathbb{R}^d$,
any $x,z \in\mathbb{D}_0$ are connected by a continuous path 
$w : [0,1] \to \mathbb{D}_0$ such that 
dist$(w(\cdot),\mathbb{D}_0^c)>0$. Consequently, there exists a
finite sequence of points $\{w_k\}_{k=0}^K \subseteq \mathbb{D}_0$ 
with $w_0=z$, $w_K=x$ and $\frac{w_{k-1}+w_k}{2}+
\mathbb{B}_{\alpha_k} \subseteq \mathbb{D}_0$, for 
$\alpha_k > |w_k-w_{k-1}|/2$ and all $1 \le k \le K$.
Applying iteratively the preceding argument, we conclude that
if $q_{zz} \in \{0,1\}$ then
$q_{zz}=q_{w_1z}=\cdots=q_{w_{K-1}z}=q_{xz}$, as claimed.

\smallskip
\noindent
{\bf Step IV}.
Next, fixing $\ep>0$ and $x \in \bD_0$ we proceed to show that
$q^\ep_{xz}=q^\ep_{xy}$ for any $z,y \in \mathbb{R}^d$. To this end,
let $t_0=t_0(y,z)$ be large enough so that $\bD_{t_0}$ 
contains the compact set 
$$
\bK := \{w : \inf_{\lambda \in [0,1]} 
|w - \lambda z - (1-\lambda) y| \le 1 \} \,,
$$  
set $\mathcal{F}_t^X:=\sigma\{X_s,s\le t\}\uparrow\mathcal{F}_\infty$ 
and consider the 
$\mathcal{F}^X$-stopping times $\theta_{t,z} \ge \tau_{t,z} \ge t$,
given by
$$
\tau_{t,z} := \inf \{ u \ge t: |X_u - z| < \ep\}\,, \qquad 
\theta_{t,z} := \inf \{ v \ge \tau_{t,z} : |X_v - z| \ge 1/2\} 
$$  
(with $\theta_{t,y} \ge \tau_{t,y} \ge t$ defined analogously).
We claim that $\mathbb{P}_x$-a.s. for some non-random 
$\eta=\eta(z,y,\ep)>0$ and any $t \ge t_0$,
\begin{equation}\label{eq:bd-zy}
\mathbb{P}_x(\theta_{t,y} < \infty | \mathcal{F}^X_{\theta_{t,z}}) 
\ge \eta \mathbb{I}_{\{\theta_{t,z}<\infty\}} \,.
\end{equation}
Indeed, assuming without loss of generality that 
$\theta=\theta_{t,z}$ is finite, for any given 
$w=X_\theta \in z + \partial \bB_{1/2}$
let $\psi(\cdot)$ denote the line segment 
from $\psi(0)=w$ to $\psi(1)=y$.  The event 
$\Gamma_{w,y} := \sup_{s \in [0,1]} |X_{\theta+s} - \psi(s)| < \ep$ 
implies that $\tau_{t,y} \le \theta + 1$ is finite and thereby 
also that $\theta_{t,y} < \infty$. Further, 
since $\psi(\cdot) \subseteq \bK \subseteq \bD_t$ 
is of distance $1/2 > \ep$ from $\partial \bK$,  
the probability of $\Gamma_{w,y}$ given $\mathcal{F}^X_{\theta}$ 
is merely 
$\delta(w) := \mathbb{P}(\sup_{s \in [0,1]} |U_s+\psi(0)-\psi(s)| < \ep)$
for a standard $d$-dimensional Brownian motion $\{U_s\}$. Clearly, 
$\eta= \inf\{\delta(w) : |w-z|=1/2 \} > 0$, yielding (\ref{eq:bd-zy}).
Now, considering the conditional expectation of (\ref{eq:bd-zy}) 
given $\mathcal{F}_t^X$, we find that
$$
\mathbb{P}_x(\theta_{t,y} < \infty | \mathcal{F}^X_t) 
\ge \eta 
\mathbb{P}_x(\theta_{t,z} < \infty | \mathcal{F}^X_t) \,.
$$ 
Further, the $\mathcal{F}_\infty$-measurable event $A_\ep(y)$  
is the limit of $\{\theta_{t,y} < \infty\}$ as $t \to \infty$
(and the same applies to $A_\ep(z)$), so it follows from 
L\'{e}vy's upward theorem (see \cite[Theorem 5.5.9]{Du}), 
that $\mathbb{P}_x$-a.s.
\begin{eqnarray*}
\mathbb{I}_{A_\ep(y)}=\mathbb{P}_x( A_\ep(y) |\mathcal{F}_\infty)
= \lim_{t \to \infty} \mathbb{P}_x(\theta_{t,y} < \infty |\mathcal{F}_t^X)  
\ge\eta
 \lim_{t \to \infty} \mathbb{P}_x(\theta_{t,z} < \infty |\mathcal{F}_t^X)  
= \eta\mathbb{I}_{A_\ep(z)}\,.
\end{eqnarray*}
The same applies with the roles of $y$ and $z$ exchanged and 
consequently, $\mathbb{P}_x$-a.s. $A_\ep(y)=A_\ep(z)$. 
In particular, $q^\ep_{xy}=\mathbb{P}_x(A_\ep(y))=\mathbb{P}_x(A_\ep(z))
=q^\ep_{xz}$, as claimed. 
\end{proof}

\vskip 5pt

\noindent
{\bf Acknowledgment} 
We thank Z-Q Chen for helpful correspondence, and   
G. Ben Arous, J. Ding, H. Duminil-Copin, G. Kozma, T. Kumagai 
and O. Zeitouni for fruitful discussions. We are grateful to
the anonymous referees for constructive feedback that  
improved the presentation of this work. We also thank the 
Courant Institute for hospitality and financial support 
of visits (by A.D. and V.S.), during which part of this work 
was done. This research was supported in part by NSF 
grant DMS-1106627,
by Brazilian CNPq grants 308787/2011-0 and  476756/2012-0, 
Faperj grant E-26/102.878/2012-BBP
and by ESF RGLIS Excellence Network.

\end{document}